\numberwithin{equation}{section}
\pgfplotsset{compat=1.15}
\DeclareMathOperator{\spt}{spt}
\newcommand{\R}{\mathbb{R}}
\newcommand{\de}{\partial}
 \newcommand{\ds}{\displaystyle}
\newtheorem{theorem}{Theorem}[section]
\newtheorem{lemma}[theorem]{Lemma}
\newtheorem{proposition}[theorem]{Proposition}
\newtheorem{remark}[theorem]{Remark}
\newtheorem{definition}[theorem]{Definition}
\definecolor{ududff}{rgb}{0.30196078431372547,0.30196078431372547,1}
\definecolor{ccqqqq}{rgb}{0.8,0,0}
\title[The first Robin-Dirichlet eigenvalue]{A sharp bound for the first Robin-Dirichlet eigenvalue}
\author{Nunzia Gavitone}
\address{Dipartimento di Matematica e Applicazioni ``R. Caccioppoli'', Universit\`a degli studi di Napoli Federico II, Via Cintia, Complesso Universitario Monte S. Angelo, 80126 Napoli, Italy.}
\email{nunzia.gavitone@unina.it (corresponding author)}
\author{Gianpaolo Piscitelli}
\address{Dipartimento di Scienze Economiche, Giuridiche, Informatiche e Motorie, Universit\`a degli Studi di Napoli Parthenope, Via Guglielmo Pepe, Rione Gescal, 80035 Nola (NA), Italy.}
\email{gianpaolo.piscitelli@uniparthenope.it}
\begin{document}

\date{}
\begin{abstract}
In this paper, we study the first eigenvalue  of the Laplacian on doubly connected domains when Robin and Dirichlet conditions are imposed  on the outer and the inner part of the boundary, respectively. We provide that the spherical shell reaches the maximum of the first eigenvalue of this problem among a suitable class of domains when the measure, the outer perimeter and inner $(n-1)^{\text{th}}$ quermassintegral are fixed. 

\vspace{.3cm}
\noindent\textsc{MSC 2020:} 35B40, 35J25, 35P15. \\
\textsc{Keywords}:  Laplacian eigenvalues, Robin-Dirichlet boundary conditions, inner parallel method, web-functions.
\vspace{.3cm}

\centering {\it Communicated by Irena Lasiecka.}

\end{abstract}
\maketitle

\section{Introduction}
In the recent literature, the study of shape optimization eigenvalue problems for the Laplacian in doubly connected domains, has been widely developed. In this kind of problems, different  conditions can be imposed on the two components of  the boundary and under suitable geometric constraints,  it is worth asking whether the spherical shell is the optimal shape.  
The study of the shape optimization problems of the first Laplacian eigenvalue in doubly connected domains,  strictly  depends on the conditions imposed on the two parts of the boundary.    More precisely, let  $\Omega_0, \Theta\subset\R^n$, $n\ge 2$, be two open, connected,  bounded sets such that $\Omega_0$ has Lipschitz boundary and $\Theta\Subset\Omega_0$. Then we consider the annular set $\Omega=\Omega_0\setminus\overline{\Theta}$.

In  this paper we study the first  Robin-Dirichlet eigenvalue of the Laplacian, that is
\begin{equation}
    \label{min_pb_intro}
\lambda(\beta,\Omega)=\min_{\substack{\psi\in H^1_{\partial\Theta}(\Omega)\\ \psi \nequiv 0} } \frac{\displaystyle\int_\Omega|\nabla \psi|^2dx+\beta\int_{\partial\Omega_0}\psi^2d\mathcal H^{n-1}}{\displaystyle\int_\Omega \psi^2dx},
\end{equation}
where $\beta>0$ is a positive parameter and  $H^1_{\partial\Theta}(\Omega)$ denotes the set of the Sobolev functions in $\Omega$ which vanish on the boundary of $\Theta$ (see \Cref{sec_3_problem} for the precise definition). Any minimizer $ u \in H^1_{\partial\Theta}(\Omega) $ of \eqref{min_pb_intro} is a solution to the following problem (see \Cref{sec_3_problem} for the details)
\begin{equation}
\label{eig_pb_intro}
\begin{cases}
-\Delta u =\lambda(\beta,\Omega) \,u & \text{ in } \Omega\\
\partial_\nu u+\beta u=0 & \text{ on }\partial\Omega_0
\\
u=0 & \text{ on }\partial\Theta,
    \end{cases}
\end{equation}
where $\nu$ is the outer unit normal to $\partial\Omega_0$ and $\partial_\nu u$ denotes the normal derivative of $u$. 

We observe that when $\beta=0$ and $\beta=+\infty$, \eqref{min_pb_intro} reduces to the so-called first Neumann-Dirichlet  and Dirichlet-Dirichlet Laplacian eigenvalue, respectively. 

In the case $\beta=0$, the first Neumann-Dirichlet Laplacian  eigenvalue $\lambda_{ND}(\Omega)$ was studied  for instance in \cite{hersch1962contribution,anoop2023reverse,dellapietra2020optimal,anoop2021shape}. 
When the hole $\Theta$ is convex, in  \cite{anoop2023reverse,dellapietra2020optimal} the authors proved that the spherical shell $A_{R_1,R_2}=B_{R_2} \setminus \overline{B_{R_1}} $ maximizes  $\lambda_{ND}(\Omega)$ among all the doubly connected domains $\Omega$ with given volume and such that  the hole $\Theta$  has fixed $(n-1)^{\text{th}}$ quermassintegral $W_{n-1}$ (see \Cref{sec_2_not} for the precise definition). This result in the planar case was proved in \cite{hersch1962contribution} and we point out that in this case the constraint on the hole $\Theta$ corresponds to fix its perimeter and the maximizer is the annulus.

When $\beta =+\infty$, $\lambda(\beta,\Omega)$ reduces to the first Dirichlet-Dirichlet eigenvalue $\lambda_{DD}(\Omega)$ studied for instance in \cite{ii2001placement,kesavan2003two,el2008extremal,georgiev2018maximizing,henrot2019optimizing, chorwadwala2020place, paoli2020sharp,chorwadwala2022optimal,cito2023stability}.
 In particular, in \cite{hersch1963method}, the author proved that in the planar case,  the annulus maximizes $\lambda_{DD}(\Omega)$  among  the doubly connected plane domains $\Omega$ of  area $A$ and such that  $4\pi A=L_{\text{out}}^2-L_{\text{int}}^2$ where $L_{\text{out}}$ and $L_{\text{int}}$ are the lengths of the outer and inner boundary respectively. Up to our knowledge, in any dimension this result has been proved in \cite{kesavan2003two} only for eccentric spherical shells, that is when $\Omega_0$ and $\Theta$ are two Euclidean balls not concentric.

The purpose of this paper is to find optimal upper bound for $\lambda(\beta,\Omega)$ for any $\beta>0$ under suitable geometrical constraints on $\Omega$, in the spirit of the quoted results for $\lambda_{DD}(\Omega)$ and $\lambda_{ND}(\Omega)$.  More precisely, let us consider the following family of admissible doubly connected sets.
\begin{definition}
 Let  $\mathcal S$ be the class of doubly connected domains $\Omega=\Omega_0\setminus\overline{\Theta}$ where $\Omega_0, \Theta\subset\R^n$, $n\ge 2$, are two open, bounded, convex sets and
 \begin{equation}
  \displaystyle \frac{|\Omega|}{\omega_n}= \displaystyle \left(\frac{P(\Omega_0)}{n\omega_ n}\right)^\frac{n}{n-1}-\displaystyle \left(\frac{(n-1)W_{n-1}(\Theta)}{\omega_ n}\right)^n.
  \end{equation}
\end{definition}

Then the main result of this paper is the following upper bound for $\lambda(\beta,\Omega)$.

\begin{theorem}
\label{thm_isop}
Let  be $\beta>0$ and let be $\Omega\in \mathcal S$. Then
\begin{equation}
\label{main_th}
\lambda(\beta,\Omega)\leq \lambda(\beta, A_{R_1,R_2}) 
\end{equation}
where $A_{R_1,R_2}=B_{R_2} \setminus \overline{B_{R_1}}$ is the spherical shell such that,  $P(B_{R_2})=P(\Omega_0)$, $W_{n-1}(B_{R_1})=W_{n-1}(\Theta)$ and $|A_{R_1,R_2}|=|\Omega|$.
\end{theorem}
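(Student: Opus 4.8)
The plan is to produce a single admissible competitor on $\Omega$ whose Rayleigh quotient is at most $\lambda(\beta,A_{R_1,R_2})$; since $\lambda(\beta,\Omega)$ is the infimum of the quotient over $H^1_{\partial\Theta}(\Omega)$, this yields \eqref{main_th}. The competitor will be obtained by transplanting the radial eigenfunction of the shell onto $\Omega$ along the parallel sets of the hole $\Theta$, so that the geometric constraints defining $\mathcal S$ can be fed in through isoperimetric-type inequalities. First I would record the one-dimensional form of the target. The first eigenfunction of $A_{R_1,R_2}$ is radial, $u(x)=z(|x|)$ with $z(R_1)=0$ and $z'(R_2)+\beta z(R_2)=0$, and
\[\lambda(\beta,A_{R_1,R_2})=\frac{\ds\int_{R_1}^{R_2}z'(r)^2r^{n-1}\,dr+\beta\,z(R_2)^2R_2^{n-1}}{\ds\int_{R_1}^{R_2}z(r)^2r^{n-1}\,dr}.\]
Since $(r^{n-1}z')'=-\lambda\,r^{n-1}z<0$, the function $r^{n-1}z'$ decreases and changes sign once, so $z$ is positive, first increasing and then decreasing to the value $z(R_2)>0$. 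I would flag this non-monotonicity immediately, as it is the source of the main difficulty.

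Next, the transplantation. For $x\in\Omega$ put $d(x)=\mathrm{dist}(x,\Theta)$, ranging over $[0,D]$ with $D=\max_{\overline\Omega}d$; the super-level sets are bounded by $\{d=t\}\cap\Omega=\partial\Theta_t\cap\Omega_0$, where $\Theta_t=\Theta\oplus tB$ is convex. I would define the web-function $\psi(x)=z(\varrho(d(x)))$, where $\varrho\colon[0,D]\to[R_1,R_2]$ is the volume-preserving reparametrization fixed by $\omega_n\varrho(t)^n=\omega_nR_1^n+|\{d<t\}\cap\Omega|$. Then $\varrho(0)=R_1$, while the class-$\mathcal S$ identity $|\Omega|=\omega_n(R_2^n-R_1^n)$ forces $\varrho(D)=R_2$, so $\psi\in H^1_{\partial\Theta}(\Omega)$. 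With $\mu(t)=\mathcal H^{n-1}(\{d=t\}\cap\Omega)$ one has $n\omega_n\varrho^{n-1}\varrho'=\mu$, and the coarea formula gives the \emph{exact} identity $\int_\Omega\psi^2\,dx=n\omega_n\int_{R_1}^{R_2}z(\varrho)^2\varrho^{n-1}\,d\varrho=\int_{A_{R_1,R_2}}u^2\,dx$. The point of this choice of $\varrho$ is precisely that the denominators of the two Rayleigh quotients coincide, so that everything is reduced to estimating the numerator.

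The two geometric inputs I would isolate as lemmas are the following. By the Steiner formula $P(\Theta_t)=\tfrac{d}{dt}|\Theta_t|$ together with the Aleksandrov--Fenchel inequalities and the normalization $W_{n-1}(\Theta)=W_{n-1}(B_{R_1})$, every lower quermassintegral of $\Theta$ lies below the corresponding ball value, whence
\[\mu(t)\ \le\ P(\Theta_t)\ \le\ P(B_{R_1+t})\ =\ n\omega_n(R_1+t)^{n-1}\qquad\text{for all }t\in[0,D];\]
this is the term in which the inner constraint enters, and it is an equality while $\Theta_t\subset\Omega_0$ and $\Theta$ is a ball. Secondly, the definition of $R_2$ gives $P(\Omega_0)=P(B_{R_2})=n\omega_nR_2^{n-1}$, which is what must ultimately match the Robin term. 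Combining the coarea formula with these perimeter bounds, and with the relative isoperimetric information that controls $\mu$ against $n\omega_n\varrho^{n-1}$, I would bound the Dirichlet energy $\int_\Omega|\nabla\psi|^2=\int_{R_1}^{R_2}z'(\varrho)^2\,\mu^2/(n\omega_n\varrho^{n-1})\,d\varrho$ by the shell energy $\int_{R_1}^{R_2}z'(\varrho)^2\,n\omega_n\varrho^{n-1}\,d\varrho$.

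The hard part will be the Robin term, and this is where I expect the real obstruction. Unlike $\partial\Theta$, the outer boundary $\partial\Omega_0$ is not a level set of $d$, so $\psi$ is not constant on it; and because $z$ is non-monotone, $z(\varrho(d))^2$ may exceed $z(R_2)^2$ on parts of $\partial\Omega_0$, so the naive pointwise bound $\int_{\partial\Omega_0}\psi^2\le z(R_2)^2P(\Omega_0)$ fails. Moreover the energy estimate and the mass identity pull the geometric inequalities in opposite directions, so a purely term-by-term comparison cannot close. The plan is therefore to rewrite the whole quotient as a one-dimensional weighted Sturm--Liouville quotient on $[0,D]$, with interior weight $\mu(t)$ and boundary measure $\sigma(E)=\mathcal H^{n-1}(\{x\in\partial\Omega_0:\ d(x)\in E\})$ of total mass $P(\Omega_0)=P(B_{R_2})$ and support in $[\delta_0,D]$, $\delta_0=\mathrm{dist}(\Theta,\partial\Omega_0)$; and then to compare it with the shell's one-dimensional quotient, in which $\sigma$ collapses to a point mass at $t=R_2-R_1$. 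The key is that the interior deficit $n\omega_n(R_1+t)^{n-1}-\mu(t)\ge0$ is compensated, through the volume identity $\int_0^D\mu=\int_0^{R_2-R_1}n\omega_n(R_1+t)^{n-1}$ (which also forces $D\ge R_2-R_1$), by the outward spreading of the boundary mass $\sigma$ beyond $t=R_2-R_1$. Making this compensation quantitative---so that interior plus boundary numerators reassemble into $\lambda(\beta,A_{R_1,R_2})$ times the (already exact) denominator---is the crux; the extremality of the shell then corresponds exactly to the equality cases of the Aleksandrov--Fenchel and isoperimetric inequalities, which simultaneously collapse $\mu$ onto the spherical weight and $\sigma$ onto a single point mass.
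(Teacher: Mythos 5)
The proposal has a genuine gap, and it sits exactly where you flag it: the Robin term. Because your test function $\psi=z(\varrho(d(\cdot,\Theta)))$ is built from the single distance function to the hole, it is not constant on $\partial\Omega_0$, and since your volume-preserving reparametrization makes the $L^2$ denominator \emph{exactly} equal to the shell's, the excess
\begin{equation*}
\beta\int_{\partial\Omega_0}\bigl(\psi^2-v_m^2\bigr)\,d\mathcal H^{n-1}\ \ge\ 0,
\end{equation*}
which is in general strictly positive (on $\partial\Omega_0$ the values of $\psi$ range up to $v_M>v_m$), must be absorbed by a strict quantitative gain in the Dirichlet term. You propose to obtain this via a one-dimensional Sturm--Liouville comparison in which the boundary mass is spread over $[\delta_0,D]$, but you never produce that comparison; this ``compensation'' is not a routine verification, it is the whole theorem. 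The paper's proof is engineered precisely so that this term never appears: using \Cref{prop_rad} (the shell profile $\phi$ has a unique critical radius $\bar R$), the domain is split into an inner region $\mathsf M_i$ with $|\mathsf M_i|=|A_{R_1,\bar R}|$ and the complementary outer region $\mathsf M_o$, and the test function is a function of the distance to $\Theta$ on $\mathsf M_i$ but a function of the distance to $\partial\Omega_0$ on $\mathsf M_o$, the two pieces glued at the common level $v_M$. With the second choice, $w\equiv v_m$ on $\partial\Omega_0$, so the Robin term is computed exactly: $\beta v_m^2P(\Omega_0)=\beta v_m^2P(B_{R_2})$. The non-monotonicity of $\phi$ that you correctly identify as the source of the difficulty is precisely what makes this two-chart construction possible, since each region uses one monotone branch of $\phi$; the constraint $P(\Omega_0)=P(B_{R_2})$ then enters the outer gradient estimate through the $W_2$ inequality \eqref{derivata_composta_super1} combined with \eqref{iso_W_2} and an ODE comparison yielding \eqref{paoli}.

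There is a second, independent failure in your Dirichlet bound. You need the pointwise inequality $\mu(t)\le n\omega_n\varrho(t)^{n-1}$, but your own inputs run the wrong way: from $\mu(t)\le n\omega_n(R_1+t)^{n-1}$ and $\varrho(0)=R_1$ one gets, by integration, $\varrho(t)\le R_1+t$, so the perimeter bound controls $\mu$ by a quantity \emph{larger} than $n\omega_n\varrho(t)^{n-1}$, not smaller. The inequality can actually fail: in $\R^3$, for a thin flat-disk hole $\Theta$ with $W_2(\Theta)=W_2(B_{R_1})$ (same mean width), the Steiner formula gives $P(B_{R_1+t})-P(\Theta_t)\equiv c_0>0$ for all $t$, while the induced deficit $n\omega_n(R_1+t)^{n-1}-n\omega_n\varrho(t)^{n-1}$ tends to $2c_0$; hence $\mu(t)=P(\Theta_t)>n\omega_n\varrho(t)^{n-1}$ for $t$ large (taking $R_2\gg R_1$ so the level sets have not yet reached $\partial\Omega_0$). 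So a term-by-term comparison in your normalization is impossible, as you yourself suspect. The paper avoids this by not reparametrizing by volume at all: on each region the test function is the shell's own profile composed with the distance ($w=G_i(d_i)$, resp.\ $w=G_o(d_o)$), the gradient terms are compared level set by level set using only the Steiner formula plus Aleksandrov--Fenchel on the inner part and \eqref{paoli} on the outer part, and the $L^2$ term is recovered from the distribution functions together with the volume matching $|\mathsf M_i|=|A_{R_1,\bar R}|$ and $|\mathsf M_o|=|A_{\bar R,R_2}|$.
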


To prove \Cref{thm_isop} we use the so-called inner parallel method (introduced by Makai \cite{makai1959bounds,makai1959principal} and P\'olya \cite{polya1960two} and then refined by Payne and Weinberger in \cite{payne1961some} basing on \cite{nagy1959parallelmengen}). In our paper, we use a generalized version of this tool for the higher dimension, the so-called web-functions method (see for instance \cite{gazzola1999existence,crasta2002sharp,brandolini2010upper,bucur2019sharp,bucur2022shape,amato2023sharp} and \cite{mana} for the anisotropic case). More precisely, we use a suitable test function which is defined by means of the distance function from the boundary.

Moreover, we point out that in order to prove \Cref{thm_isop} we need to study the problem \eqref{eig_pb_intro} in the radial case, that is when $\Omega=A_{R_1,R_2}$ is the spherical shell. We prove that in this case the first eigenfunctions $v$ are radially symmetric that is there exists a function $\phi(r)$ such that $v(x)=\phi(|x|)$, and satisfying the following problem (see \Cref{sec_3_problem} for the details)
\[
\begin{cases}
- \frac{1}{r^{n-1}}\left(\phi'(r)r^{n-1}\right)'=\lambda(\beta, A_{R_1,R_2}) \phi(r) & \text{ for } r\in(R_1,R_2)\\
\phi ' (R_2)+\beta \phi(R_2)=0 \\
\phi(R_1)=0.
\end{cases}
\]

In particular, we prove that $\phi(r)$, for any $R_1<r<R_2$, is not globally monotone and has  a unique critical point $\bar R \in(R_1,R_2)$ (see \Cref{prop_existence}). This result allows us to study the shape optimization problem related to $\lambda(\beta,\Omega)$, by merging  both the Robin-Neumann and the Neumann-Dirichlet eigenvalue problems by using the inner parallel method.

Furthermore, since we show that 
\[
\lim_{\beta \to + \infty}\lambda(\beta,\Omega)=\lambda_{DD}(\Omega),
\]
then \Cref{thm_isop} also gives 
\begin{equation}
\label{iso_DD}
\lambda_{DD}(\Omega)\leq \lambda_{DD}(A_{R_1,R_2}).
\end{equation}
The relationship \eqref{iso_DD} extends the Hersch's result in \cite{hersch1963method} for $\lambda_{DD}(\Omega)$ to any dimension and to a larger class of doubly connected domains than  the ones considered  in \cite{kesavan2003two}. Indeed eccentric shells belong in the class of admissible sets $\mathcal S$. 
 
The plan of the paper is the following. In \Cref{sec_2_not}, we provide the notation and the preliminaries; in \Cref{sec_3_problem}, we investigate the foundation of the problem and we study the radial case; in \Cref{sec_4_shape}, we compute the first order shape derivative of $\lambda(\beta,\Omega)$ in order to prove that the spherical shell is a critical set; finally, in \Cref{sec_5_isop}, we prove \Cref{thm_isop}.

\section{Notation and Preliminaries} 
\label{sec_2_not}
Throughout this paper, we denote by $B_r(x_0)$ and $B_r$ the balls in $\mathbb{R}^n$ of radius $r>0$, centered at  $x_0\in\mathbb{R}^n$ and at the origin, respectively. Moreover by $B$, $\mathbb{S}^{n-1}$ and $\omega_n$ we will denote respectively the unit ball of $\mathbb R^{n}$, its boundary and its volume. 
Let $0<r<R$, then the spherical shell with radii $r$ and $R$ centered at the origin is denoted as
\begin{equation*}
A_{r,R}=\{x \in \mathbb R^{n} \colon r<|x|<R\}.
\end{equation*}
Moreover, the $(n-1)$-dimensional Hausdorff measure in $\mathbb{R}^n$ will be denoted by $\mathcal H^{n-1}$ and, the Euclidean scalar product in $\mathbb{R}^n$ by $\langle\cdot,\cdot\rangle$.

Let $D\subseteq\mathbb{R}^n$ be an open, bounded set. For any measurable set $E\subseteq\R^{n}$, the perimeter of $E$ in $D$ is (see for instance \cite{maggi2012sets})
\begin{equation*}
P(E;D)=\sup\left\{  \int_E {\rm div} \varphi\:dx :\;\varphi\in C^{\infty}_c(D;\mathbb{R}^n),\;||\varphi||_{\infty}\leq 1 \right\}.
\end{equation*}
The perimeter of $E$ in $\mathbb{R}^n$ will be denoted by $P(E)$ and, if $P(E)<+\infty$, we say that $E$ is a set of finite perimeter. Moreover, if $E$ has Lipschitz boundary, it holds
\[
P(E)=\mathcal H^{n-1}(\partial E).
\]

The Lebesgue measure of a measurable set $E \subset \mathbb R^{n}$ will be denoted by $|E|$.
Moreover, we recall that the inradius of $E\subset \mathbb R^{n}$ is defined as follows 
\begin{equation}\label{inradius_def}
\rho( E)=\sup_{x\in E}\; \inf_{y\in\partial E} |x-y|,
\end{equation} 
while the diameter of $E$ is
\[
\text{diam} (E)=\sup_{x,y \in E} |x-y|.
\]
Let us observe that, if $E\subset \R^n$ is a bounded, convex set, it holds (see for instance \cite{della2018sharp,schneider2014convex})
\begin{equation}
\label{pmi}
\ds\frac{\rho(E)}{n}\le \ds \frac{|E|}{P(E)} \le \rho(E)
\end{equation}

In what follows we recall some properties of convex bodies, which will be useful in the following. For more details on these topics we refer for instance to \cite{kesavan2006symmetrization,schneider2014convex}.

Let $K$ be a bounded convex open set, the outer parallel body of $K$ at distance $\delta>0$ is the following Minkowski sum
\[ 
K+\delta B=\{ x+\delta y\in\mathbb{R}^n\;|\; x\in K,\;y\in B \}.
\]
The Steiner formulas assert that
\begin{equation*}\label{general_steiner}
|K+\delta B|=\sum_{i=0}^{n}\binom{n}{i} W_i(K)\delta^i,
\end{equation*}
and
\begin{equation*}\label{general_steiner_per}
P(K+\delta B)=n\sum_{i=0}^{n-1}\binom{n-1}{i} W_{i+1}(K)\delta^{i}. 
\end{equation*}
The coefficients $W_{i}(K)$ are known as the quermassintegrals of $K$. In particular, it holds that
\[
W_{0}(K)=\left|K\right|,\quad nW_{1}(K)= P(K), \quad W_{n}(K)=\omega_{n}.
\]

Furthermore, the Aleksandrov-Fenchel inequalities state that
\begin{equation}
  \label{afineq}
\left( \frac{W_j(K)}{\omega_n} \right)^{\frac{1}{n-j}} \ge \left(
  \frac{W_i(K)}{\omega_n} \right)^{\frac{1}{n-i}}, \quad 0\le i < j
\le n-1,
\end{equation}
where the inequality is replaced by an equality if and only if $K$ is a ball. 

When $i=0$ and $j=1$, \eqref{afineq} reduces to the classical isoperimetric inequality:
\begin{equation}
    \label{aleksandrov-fenchel}
P(K) \ge n \omega_n^{\frac 1 n} |K|^{1-\frac 1 n}.
\end{equation}


It  the sequel it will be  useful \eqref{afineq} in the case  $i=1$ and $j=2$, that is
\begin{equation}\label{iso_W_2}
W_2(K)\geq n^{-\frac{n-2}{n-1}}\omega_n^{\frac{1}{n-1}}P(K)^{\frac{n-2}{n-1}}.
\end{equation}

Let us denote by $K^{*}$ a ball such that $W_{n-1}(K)=W_{n-1}(K^{*})$. Then, by Aleksandrov-Fenchel inequalities \eqref{afineq}, we have
\[
\left(\frac{W_{i}(K)}{\omega_{n}}\right)^{\frac{1}{n-i}}\le
\frac{W_{n-1}(K)}{\omega_{n}} =
\left(\frac{W_{i}(K^{*})}{\omega_{n}}\right)^{\frac{1}{n-i}},
\]
for $0\le i < n-1$, and hence 
\begin{equation*}
\label{afapp}
W_{i}(K)\le W_{i}(K^{*})\quad 0\le i<n-1.
\end{equation*}


Finally we recall a useful result whose proof can be found for instance in \cite{brandolini2010upper} and \cite{bucur2019sharp}.
\begin{lemma}
Let $\Omega_0$ be a bounded, open, convex set of $\R^n$  and let $f:[0,+\infty)\to[0,+\infty) $  be a non decreasing $C^1$ function
. Let $u(x):=f(d_o(x))$, where $d_o$ is the distance function from the boundary $\de \Omega_0$ and
\begin{equation*}
	\begin{split}	
\mathsf  E_{t}&:=\{x\in \Omega_0\;:\; u(x)>t\}.\\
	\end{split}
	\end{equation*}
Then, it holds
\begin{equation}\label{derivata_composta_super1}
		-\dfrac{d}{dt}P(\mathsf  E_{t})\geq n (n-1)\dfrac{W_2(\mathsf  E_{t})}{|D u|_{u=t}} \quad\text{ for a.e. }t \in (0,\rho(\Omega_0)).
	\end{equation}
\end{lemma}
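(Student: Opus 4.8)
The plan is to identify the super-level sets $\mathsf E_t$ with the inner parallel bodies of $\Omega_0$ and to reduce \eqref{derivata_composta_super1} to a purely geometric one-sided differential inequality for their perimeters. Since $f$ is non-decreasing and $d_o$ is continuous, for a.e. $t$ the set $\mathsf E_t$ coincides with $\Omega_s:=\{x\in\Omega_0\colon d_o(x)>s\}$, where $s=f^{-1}(t)$ (the values of $t$ corresponding to the flat parts of $f$, on which $f'=0$, form a null set and can be discarded). Being an inner parallel body of a convex set, each $\Omega_s$ is convex. Moreover, since the distance function satisfies $|\nabla d_o|=1$ a.e.\ in $\Omega_0$, on the level set $\{u=t\}=\{d_o=s\}$ one has $|Du|=f'(d_o)\,|\nabla d_o|=f'(s)$, a constant. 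As $P(\mathsf E_t)=P(\Omega_s)$, the chain rule gives $-\dfrac{d}{dt}P(\mathsf E_t)=-\dfrac{1}{f'(s)}\dfrac{d}{ds}P(\Omega_s)$, so it suffices to prove that
\[
-\frac{d}{ds}P(\Omega_s)\ \ge\ n(n-1)\,W_2(\Omega_s)\qquad\text{for a.e. } s.
\]

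To establish this I would first record the inclusion $\Omega_s+\delta B\subseteq\Omega_{s-\delta}$, valid for $0<\delta\le s$: if $x=y+\delta z$ with $y\in\Omega_s$ and $z\in B$, then the $1$-Lipschitz character of $d_o$ gives $d_o(x)\ge d_o(y)-\delta> s-\delta$. Both sets are convex, and for nested convex bodies the perimeter is monotone (the nearest-point projection onto the inner body is $1$-Lipschitz and maps the outer boundary onto the inner one), so $P(\Omega_{s-\delta})\ge P(\Omega_s+\delta B)$. Expanding the right-hand side by the Steiner formula for the perimeter, $P(\Omega_s+\delta B)=P(\Omega_s)+n(n-1)\,W_2(\Omega_s)\,\delta+O(\delta^2)$, I obtain $P(\Omega_{s-\delta})-P(\Omega_s)\ge n(n-1)\,W_2(\Omega_s)\,\delta+O(\delta^2)$. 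Dividing by $\delta>0$ and letting $\delta\to0^+$ yields the claimed bound, where I use that $s\mapsto P(\Omega_s)$ is monotone non-increasing, hence differentiable for a.e.\ $s$, and that at such points the one-sided difference quotient computes $-\tfrac{d}{ds}P(\Omega_s)$.

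Combining this geometric inequality with the change of variables $s=f^{-1}(t)$ and the identity $|Du|_{u=t}=f'(s)$ gives exactly \eqref{derivata_composta_super1}. The crux of the argument, and the reason an inequality rather than an equality appears, is the inclusion $\Omega_s+\delta B\subseteq\Omega_{s-\delta}$: for instance, when $\Omega_0$ is a ball this inclusion is an equality and the first variation of the perimeter equals $n(n-1)W_2$ exactly, whereas the inner parallel bodies of a general convex set need not fill $\Omega_{s-\delta}$, so the perimeter may decrease strictly faster than the integral of the mean curvature predicts. Justifying this inclusion, the monotonicity of the perimeter on convex bodies, the a.e.\ differentiability of $s\mapsto P(\Omega_s)$, and the harmless exclusion of the degenerate level values where $f'=0$ is where the real work lies.
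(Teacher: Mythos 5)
The paper itself contains no proof of this lemma: it is recalled as a known result, with the proof deferred to \cite{brandolini2010upper} and \cite{bucur2019sharp}. So there is no in-paper argument to compare against; judged on its own, your proof is correct, self-contained, and is essentially the standard inner-parallel-body argument behind the cited result. The chain of steps all check out: the identification $\mathsf E_t=\Omega_s$ with $s=f^{-1}(t)$ (in fact this holds for \emph{every} $t$ in the range of $f$, not just a.e., since $f$ is continuous and non-decreasing, with $s_t=\sup\{r:f(r)\le t\}$); convexity of the inner parallel bodies $\Omega_s$ (super-level sets of the concave function $d_o$); the inclusion $\Omega_s+\delta B\subseteq\Omega_{s-\delta}$ via the $1$-Lipschitz property of $d_o$; monotonicity of perimeter under inclusion of convex bodies; and the first-order Steiner expansion $P(\Omega_s+\delta B)=P(\Omega_s)+n(n-1)W_2(\Omega_s)\,\delta+O(\delta^2)$, which together give $-\frac{d}{ds}P(\Omega_s)\ge n(n-1)W_2(\Omega_s)$ at every differentiability point of the monotone function $s\mapsto P(\Omega_s)$; finally $|Du|_{u=t}=f'(s_t)$ converts this into \eqref{derivata_composta_super1}. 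Your closing remark correctly identifies why only an inequality holds (for a ball the inclusion is an equality; for a polytope it is strict).

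Two points should be tightened, both routine. First, the exceptional levels are not only the images of the \emph{flat parts} of $f$: to divide by $f'(s_t)$ you must discard all critical values, i.e.\ the set $f(\{f'=0\})$, and this is null by Sard's theorem for $C^1$ functions of one variable (flat intervals alone would not account for isolated or Cantor-type zeros of $f'$). Second, for the ``a.e.\ $t$'' conclusion you also need that the set of $t$ for which $s\mapsto P(\Omega_s)$ fails to be differentiable at $s_t$ is null; this follows because the non-differentiability set $N$ of that monotone function is Lebesgue-null and $f$, being $C^1$ hence locally Lipschitz, maps $N$ to a null set of levels, so $t\notin f(N)$ forces $s_t\notin N$. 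With these two measure-theoretic remarks added, your proof is complete.
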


\section{The Robin-Dirichlet eigenvalue problem}
\label{sec_3_problem}
Let  $\Omega_0, \Theta\subset\R^n$, $n\ge 2$, be two open, bounded, connected  sets  such that $\Omega_0$ has Lipschitz boundary and  $\Theta\Subset\Omega_0$. Let us consider the doubly connected set $\Omega=\Omega_0\setminus\overline{\Theta}$. In this section we study the following eigenvalue problem
\begin{equation}
\label{eig_pb}
\begin{cases}
-\Delta u =\lambda \,u & \text{ in } \Omega\\
\partial_\nu u+\beta u=0 & \text{ on }\partial\Omega_0
\\
u=0 & \text{ on }\partial\Theta,
    \end{cases}
\end{equation}
where $\beta>0$ is a positive parameter, $\nu$ is the outer unit normal to $\partial\Omega_0$ and $\partial_\nu u$ denotes the normal derivative of $u$.
Let us consider the set $H^1_{\partial\Theta}(\Omega)$ of the Sobolev functions in $\Omega$ vanishing on $\overline{\Theta}$, that is the closure in $H^1(\Omega)$ of
\[
C^\infty_{\partial\Theta} (\Omega):=\{ u|_{\Omega}  \ | \ u \in C_0^\infty (\R^n),\ \spt (u)\cap \partial\Theta=\emptyset \}.  
\]
We first give the definition of eigenvalue and eigenfunction to the problem \eqref{eig_pb}.
\begin{definition}\label{weak_sol_def}
The real number $\lambda$ is an eigenvalue of \eqref{eig_pb} if and only if there exists a function $z\in H^{1}_{\partial\Theta}(\Omega)$, not identically zero, such that 
\begin{equation}
    \label{weak_sol}
\int_{\Omega}\langle\nabla z,\nabla\varphi\rangle \;dx+ \beta \int_{\partial\Omega_0} z\varphi\;d\mathcal{H}^{n-1}=\lambda\int_{\Omega}z \varphi \;dx
\end{equation}
	for every $\varphi\in H^{1}_{\partial\Theta}(\Omega)$. The function $u$ is called an eigenfunction corresponding to $\lambda$.
\end{definition}

\subsection{The first eigenvalue and basic properties}
In this section, we study the main properties of the first eigenvalue to the problem \eqref{eig_pb} and of the corresponding eigenfunctions. Let us consider the following quantity
 \begin{equation}
    \label{min_pb}
\lambda(\beta,\Omega)=\inf_{\substack{\psi\in H^1_{\partial\Theta}(\Omega)\\ \psi \nequiv 0} }\frac{\displaystyle\int_\Omega|\nabla \psi|^2dx+\beta\int_{\partial\Omega_0}\psi^2d\mathcal H^{n-1}}{\displaystyle\int_\Omega \psi^2dx}.
\end{equation}
By using standard argument of the Calculus of Variations, it is possible to prove that $\lambda(\beta,\Omega)$ is the first eigenvalue to the problem \eqref{eig_pb} and we get the following result.
\begin{proposition}
\label{prop_existence}
Let be $\beta>0$ and  $\Omega=\Omega_0\setminus\overline{\Theta}$, where  $\Omega_0, \Theta\subset\R^n$, $n\ge 2$, are two open, bounded, connected  sets  such that $\Omega_0$ has Lipschitz boundary and  $\Theta\Subset\Omega_0$. Then $\lambda(\beta,\Omega)$ is positive and it is actually a minimum. Moreover, any minimizer $u\in H^{1}_{\partial\Theta}(\Omega)$ of \eqref{min_pb} has constant sign  and is a weak solution to the problem \eqref{eig_pb}. \end{proposition}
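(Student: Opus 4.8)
The plan is to run the direct method of the calculus of variations on the Rayleigh quotient in \eqref{min_pb}, and then read off the Euler--Lagrange equation and the sign property from a minimizer. The starting point is positivity together with the coercivity needed later for compactness. Since $\beta>0$, both terms in the numerator of \eqref{min_pb} are nonnegative, so $\lambda(\beta,\Omega)\ge 0$; to exclude the value $0$ I would invoke a Poincaré--Friedrichs inequality for $H^1_{\partial\Theta}(\Omega)$. Because every $\psi\in H^1_{\partial\Theta}(\Omega)$ vanishes on $\partial\Theta$, which carries positive $\mathcal H^{n-1}$-measure, there is a constant $C=C(\Omega)>0$ with $\int_\Omega \psi^2\,dx\le C\int_\Omega|\nabla\psi|^2\,dx$ for all such $\psi$, and hence $\lambda(\beta,\Omega)\ge C^{-1}>0$.

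For attainment I would take a minimizing sequence $\{\psi_k\}\subset H^1_{\partial\Theta}(\Omega)$ normalized by $\int_\Omega\psi_k^2\,dx=1$. Then the numerators stay bounded, so $\int_\Omega|\nabla\psi_k|^2\,dx$ and $\int_{\partial\Omega_0}\psi_k^2\,d\mathcal H^{n-1}$ are bounded; together with the Poincaré inequality this makes $\{\psi_k\}$ bounded in $H^1(\Omega)$. Passing to a subsequence, $\psi_k\rightharpoonup u$ weakly in $H^1(\Omega)$. Since $\Omega_0$ is Lipschitz, the Rellich theorem gives $\psi_k\to u$ strongly in $L^2(\Omega)$, so $\int_\Omega u^2\,dx=1$ and $u\nequiv 0$, while the compact trace embedding gives $\psi_k\to u$ in $L^2(\partial\Omega_0)$. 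As $H^1_{\partial\Theta}(\Omega)$ is a closed subspace it is weakly closed, whence $u\in H^1_{\partial\Theta}(\Omega)$. By weak lower semicontinuity of the Dirichlet integral and convergence of the boundary term, the numerator at $u$ does not exceed $\liminf_k$ of the numerators, so $u$ realizes the infimum and $\lambda(\beta,\Omega)$ is a minimum.

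To obtain the weak formulation I would fix $\varphi\in H^1_{\partial\Theta}(\Omega)$ and differentiate $t\mapsto R(u+t\varphi)$, where $R$ is the Rayleigh quotient, at $t=0$; since $t=0$ is a minimum the derivative vanishes, and using $R(u)=\lambda(\beta,\Omega)$ this rearranges exactly into \eqref{weak_sol}, so $u$ is a weak solution. For the constant sign I would note that $|u|\in H^1_{\partial\Theta}(\Omega)$, with $|\nabla|u||=|\nabla u|$ a.e.\ and the same $L^2(\Omega)$, $L^2(\partial\Omega_0)$ norms, so $|u|$ is also a minimizer and hence a nonnegative weak solution. By interior elliptic regularity and the strong maximum principle, $|u|>0$ in $\Omega$, which forces the original minimizer $u$ to be of constant sign.

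The step I expect to require the most care is the positivity/coercivity input, that is the Poincaré--Friedrichs inequality on $H^1_{\partial\Theta}(\Omega)$ and the compactness of the trace map into $L^2(\partial\Omega_0)$; both rely on the Lipschitz regularity of $\Omega_0$ and on $\partial\Theta$ having positive capacity. The sign argument is delicate only insofar as it appeals to the strong maximum principle, which in turn uses the (standard) interior smoothness of the eigenfunction.
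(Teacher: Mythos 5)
The paper does not actually write out a proof of this proposition: it is dispatched with the remark that it follows ``by standard arguments of the Calculus of Variations''. Your proposal is precisely that standard argument (direct method for the Rayleigh quotient, vanishing first variation to obtain \eqref{weak_sol}, replacement of $u$ by $|u|$ plus the strong maximum principle for the constant sign), so in substance you have supplied the proof the authors intended, and it is correct in outline.

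One step, however, is stated with the wrong justification and needs a small repair. You apply the Rellich theorem, the compactness of the trace operator, and the Poincar\'e inequality on $\Omega$ itself, citing the Lipschitz regularity of $\Omega_0$. But $\partial\Theta$ carries no regularity hypothesis ($\Theta$ is only open, bounded, connected), so $\Omega=\Omega_0\setminus\overline{\Theta}$ need not be a Sobolev extension domain, and for such open sets the embedding $H^1(\Omega)\hookrightarrow L^2(\Omega)$ can fail to be compact. The cure is built into the definition of $H^1_{\partial\Theta}(\Omega)$: its elements are $H^1$-limits of functions vanishing in a neighbourhood of $\partial\Theta$, hence they extend by zero across $\overline{\Theta}$ to functions in $H^1(\Omega_0)$, isometrically. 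All compactness statements (Rellich, compact trace into $L^2(\partial\Omega_0)$) should be applied to these extensions on the Lipschitz set $\Omega_0$; likewise the Poincar\'e inequality then follows because the extended functions vanish a.e.\ on $\Theta$, a set of positive Lebesgue measure inside the connected Lipschitz domain $\Omega_0$, which avoids any appeal to the capacity or the $\mathcal H^{n-1}$-measure of $\partial\Theta$. Two further points deserve a word: since $H^1_{\partial\Theta}(\Omega)$ is defined as a closure, the claim $|u|\in H^1_{\partial\Theta}(\Omega)$ is not automatic; it follows by taking approximants $u_k$ supported away from $\partial\Theta$, observing that $|u_k|$ (after mollification) still lies in the space, is bounded in $H^1$ and converges in $L^2$ to $|u|$, hence converges weakly in $H^1$ to $|u|$, which therefore belongs to the weakly closed subspace. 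Finally, your sign argument uses the connectedness of $\Omega$, which is implicit in the paper's standing assumption that $\Omega$ is a doubly connected domain and should be stated as such.
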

Furthermore, we are able to prove that the first eigenvalue is simple.
\begin{proposition}
\label{prop_simpl}
Let be $\beta>0$ and $\Omega=\Omega_0\setminus\overline{\Theta}$, where  $\Omega_0, \Theta\subset\R^n$, $n\ge 2$, are two open, bounded, connected  sets  such that $\Omega_0$ has Lipschitz boundary and  $\Theta\Subset\Omega_0$. Then $\lambda(\beta,\Omega)$ is simple, that is all the associated eigenfunctions are scalar multiple of each other.
\end{proposition}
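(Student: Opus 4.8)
The plan is to combine the variational characterisation \eqref{min_pb} with a Picone-type inequality; this is the most economical route when one only has $H^1$ regularity of the eigenfunctions and the inner boundary $\partial\Theta$ is merely Lipschitz. First I would record that, by \Cref{prop_existence}, every first eigenfunction has constant sign, and since $-\Delta u=\lambda(\beta,\Omega)\,u\ge 0$ whenever $u\ge 0$, the function $u$ is superharmonic; the strong maximum principle then forces $u>0$ throughout the connected set $\Omega$. Thus, given two first eigenfunctions $u$ and $v$, I may assume both are strictly positive in $\Omega$, bounded (by standard elliptic regularity), and normalised in $L^2(\Omega)$. The goal is to show $v=cu$ for some constant $c$.

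The central step is the regularised Picone identity: for any $\epsilon>0$ one has the pointwise equality
\begin{equation*}
\nabla u\cdot\nabla\!\left(\frac{v^2}{u+\epsilon}\right)=|\nabla v|^2-\left|\nabla v-\frac{v}{u+\epsilon}\nabla u\right|^2\le |\nabla v|^2 .
\end{equation*}
The function $\varphi_\epsilon:=v^2/(u+\epsilon)$ is an admissible test function in \Cref{weak_sol_def}: it lies in $H^1(\Omega)$ because $u+\epsilon\ge\epsilon>0$ and $v\in H^1\cap L^\infty$, and it vanishes on $\partial\Theta$ since $v$ does, so $\varphi_\epsilon\in H^1_{\partial\Theta}(\Omega)$. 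Inserting $\varphi_\epsilon$ into the weak formulation \eqref{weak_sol} for $u$ and using the displayed inequality gives
\begin{equation*}
\int_\Omega|\nabla v|^2\,dx\ge \lambda(\beta,\Omega)\int_\Omega\frac{u\,v^2}{u+\epsilon}\,dx-\beta\int_{\partial\Omega_0}\frac{u\,v^2}{u+\epsilon}\,d\mathcal H^{n-1}.
\end{equation*}

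Letting $\epsilon\to 0^+$, the integrand $u\,v^2/(u+\epsilon)$ increases monotonically to $v^2$ on $\{u>0\}$, which is almost all of $\Omega$ by the first step; monotone convergence on the volume term together with the bound $u\,v^2/(u+\epsilon)\le v^2$ on the boundary term then yields
\begin{equation*}
\int_\Omega|\nabla v|^2\,dx+\beta\int_{\partial\Omega_0}v^2\,d\mathcal H^{n-1}\ge\lambda(\beta,\Omega)\int_\Omega v^2\,dx .
\end{equation*}
Since $v$ is itself a minimiser of \eqref{min_pb}, this is in fact an equality. Tracing the equalities backwards forces the Picone defect $\big|\nabla v-\tfrac{v}{u}\nabla u\big|^2$ to vanish almost everywhere, i.e.\ $\nabla(v/u)=0$ a.e.; as $\Omega$ is connected, $v/u$ is a constant, which is exactly the claimed simplicity.

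The delicate point is precisely the passage to the limit near the Dirichlet boundary $\partial\Theta$, where both $u$ and $v$ degenerate and the ratio $v/u$ is an indeterminate form. Regularising by $u+\epsilon$ is what sidesteps the need for boundary gradient (Hopf-type) estimates, which would otherwise demand more regularity of $\partial\Theta$ than is assumed here. One must also verify that the boundary term survives the limit in the correct direction; since the subtracted quantity satisfies $\beta\int_{\partial\Omega_0}u\,v^2/(u+\epsilon)\le\beta\int_{\partial\Omega_0}v^2$, the inequality is preserved, and the equality case additionally recovers $v^2=v^2\mathbf 1_{\{u>0\}}$ on $\partial\Omega_0$. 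A symmetric argument exchanging the roles of $u$ and $v$ provides a cross-check, while the more classical alternative of sliding $u-tv$ until it first touches zero and invoking the strong maximum principle would also work, but is harder to run cleanly because of the vanishing of the eigenfunctions on $\partial\Theta$.
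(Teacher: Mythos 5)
Your argument is correct, but it takes a genuinely different route from the paper's. The paper's proof of \Cref{prop_simpl} exploits linearity: given two minimizers $u,v$ of \eqref{min_pb} with $u>0$ (from \Cref{prop_existence}), it chooses $\chi$ so that $\int_\Omega (u-\chi v)\,dx=0$; if $u-\chi v\not\equiv 0$ it is again an eigenfunction for $\lambda(\beta,\Omega)$, hence a minimizer, hence of constant sign by \Cref{prop_existence}, contradicting the zero average --- three lines, no maximum principle, no $L^\infty$ bounds. Your regularized Picone computation with the test function $v^2/(u+\epsilon)$ is, in substance, exactly the computation the paper performs for the \emph{next} proposition (there with $\varphi=u^2/(z+\varepsilon)$, to show any positive eigenfunction is a first one), except that the paper concludes that proposition by citing simplicity, while you run the equality case to get $\nabla(v/u)=0$ directly. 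What your route buys: it never uses linearity of the eigenspace, so it survives for the $p$-Laplacian and anisotropic variants the authors list as open problems, and it would let the two propositions be merged into one self-contained statement. What it costs: you need $u>0$ pointwise and $u,v\in L^\infty$, and your phrase ``tracing the equalities backwards'' compresses a real step --- equality in the limit does not kill the defect at fixed $\epsilon$; one should subtract the two weak identities to get $\int_\Omega\bigl|\nabla v-\tfrac{v}{u+\epsilon}\nabla u\bigr|^2dx=\lambda(\beta,\Omega)\int_\Omega\tfrac{\epsilon v^2}{u+\epsilon}\,dx-\beta\int_{\partial\Omega_0}\tfrac{\epsilon v^2}{u+\epsilon}\,d\mathcal H^{n-1}\to 0$ by dominated convergence, then invoke Fatou to conclude $\nabla v=\tfrac{v}{u}\nabla u$ a.e., so that $v/u$ is locally constant and, by connectedness of $\Omega$, constant. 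Since this level of compression matches the paper's own style in the adjacent proof, your proposal counts as complete.
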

\begin{proof}
Let $u,v$ be two minimizers of (\ref{min_pb}). By \Cref{prop_existence} we can assume that $u$ is positive in $\Omega$. Then we have
\begin{equation*}
    \int_{\Omega} u(x) \,dx> 0.
\end{equation*}
Therefore, we can find a real constant $\chi$ such that
\begin{equation} \label{nullmean}
    \int_{\Omega} (u(x)-\chi v(x)) \,dx = 0.
\end{equation}
Since $u-\chi v$  is still a solution of the problem (\ref{min_pb}), then we necessarily have that $u \equiv \chi v $ in $\Omega$ and the simplicity of $\lambda(\beta,\Omega)$ follows.
\end{proof}
Finally, we prove that all eigenfunctions with constant sign are the first ones.
\begin{proposition}
Let be $\beta>0$ and $\Omega=\Omega_0\setminus\overline{\Theta}$, where  $\Omega_0, \Theta\subset\R^n$, $n\ge 2$, are two open, bounded, connected  sets  such that $\Omega_0$ has Lipschitz boundary and  $\Theta\Subset\Omega_0$. Then, any positive function $z\in H^1_{\partial\Theta}(\Omega)$ that satisfies \eqref{eig_pb}, in the sense of Definition \ref{weak_sol_def},
is a first eigenfunction.
\end{proposition}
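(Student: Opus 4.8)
The plan is to exploit the symmetry of the bilinear form appearing in the weak formulation \eqref{weak_sol} together with the positivity of the two functions involved, in the classical way one shows that an eigenfunction of constant sign must be associated with the bottom of the spectrum. Let $z\in H^1_{\partial\Theta}(\Omega)$ be positive and assume it is a weak solution of \eqref{eig_pb} for some eigenvalue $\mu$, so that
\begin{equation*}
\int_\Omega \langle \nabla z, \nabla \varphi\rangle\, dx + \beta\int_{\partial\Omega_0} z\varphi\, d\mathcal H^{n-1} = \mu \int_\Omega z\varphi\, dx
\end{equation*}
for every $\varphi\in H^1_{\partial\Theta}(\Omega)$. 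By \Cref{prop_existence}, the first eigenvalue $\lambda(\beta,\Omega)$ is attained by a minimizer $u\in H^1_{\partial\Theta}(\Omega)$ which has constant sign; up to replacing $u$ by $-u$ we may assume $u>0$ in $\Omega$, and $u$ is itself a weak solution of \eqref{eig_pb} with eigenvalue $\lambda(\beta,\Omega)$.

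The key step is to test each equation with the other eigenfunction. Since both $u$ and $z$ belong to $H^1_{\partial\Theta}(\Omega)$, they are admissible test functions. Choosing $\varphi=u$ in the weak formulation satisfied by $z$, and $\varphi=z$ in the weak formulation satisfied by $u$, I obtain two identities whose left-hand sides coincide by the symmetry of the bilinear form
\begin{equation*}
(\psi,\varphi)\longmapsto \int_\Omega \langle \nabla \psi, \nabla \varphi\rangle\, dx + \beta\int_{\partial\Omega_0} \psi\varphi\, d\mathcal H^{n-1}.
\end{equation*}
Subtracting the two identities therefore yields
\begin{equation*}
\big(\mu-\lambda(\beta,\Omega)\big)\int_\Omega z\, u\, dx = 0.
\end{equation*}

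To conclude, I observe that $z>0$ and $u>0$ in $\Omega$ force $\int_\Omega z u\, dx>0$, so the factor $\mu-\lambda(\beta,\Omega)$ must vanish, i.e.\ $\mu=\lambda(\beta,\Omega)$. Hence $z$ is an eigenfunction associated with the first eigenvalue, which is exactly the claim. There is no real obstacle in this argument: the only points requiring a word of justification are that $u$ and $z$ are mutually admissible test functions (immediate, as both lie in $H^1_{\partial\Theta}(\Omega)$) and that the product of two strictly positive functions has strictly positive integral; the heart of the matter is simply the symmetry of the Robin--Dirichlet bilinear form, which makes the cross terms cancel. I would also note that the simplicity of $\lambda(\beta,\Omega)$ established in \Cref{prop_simpl} then gives, a posteriori, that $z$ is a scalar multiple of $u$.
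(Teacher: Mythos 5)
Your proof is correct, but it takes a genuinely different route from the paper's. You run the classical orthogonality argument for symmetric forms: testing the weak formulation \eqref{weak_sol} for $z$ with $\varphi=u$ and the one for $u$ with $\varphi=z$, the symmetry of the Robin--Dirichlet bilinear form makes the left-hand sides coincide, leaving $\bigl(\mu-\lambda(\beta,\Omega)\bigr)\int_\Omega zu\,dx=0$, and the positivity of both functions excludes $\int_\Omega zu\,dx=0$. The paper instead uses a Picone-type (hidden convexity) argument: it tests the equation satisfied by $z$ with $\varphi=\frac{u^2}{z+\varepsilon}$, subtracts the resulting identity from the Rayleigh identity for $u$, recognizes the difference of the gradient terms as $\int_\Omega \left|\nabla u-\frac{u}{z+\varepsilon}\nabla z\right|^2dx\ge 0$, and lets $\varepsilon\to 0^+$ to get $\lambda\le\lambda(\beta,\Omega)$; equality then follows from the minimality of $\lambda(\beta,\Omega)$, and simplicity gives $z=u$ up to a constant. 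Your argument is shorter and more elementary: it exploits linearity and symmetry directly, and it does not even need the final appeal to $\lambda(\beta,\Omega)$ being the smallest eigenvalue, since the cross-testing identity forces $\mu=\lambda(\beta,\Omega)$ outright. What the paper's approach buys is robustness: the Picone inequality survives in settings where the bilinear-form symmetry trick is unavailable, most notably for nonlinear operators such as the $p$-Laplacian (one of the extensions the authors list among their open problems). Both proofs rest on \Cref{prop_existence} for the existence of a constant-sign first eigenfunction, and, as you observe at the end, \Cref{prop_simpl} then upgrades the conclusion to $z$ being a scalar multiple of $u$, exactly as in the paper's closing line.
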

\begin{proof}
Let $u$ be a positive eigenfunction corresponding to $\lambda(\beta,\Omega)$; we have 
\begin{equation}\label{testfuncu}
	\int_{\Omega} |\nabla u|^2 \;dx+\beta\int_{\partial \Omega_0}u^2 \,d\mathcal{H}^{n-1}=\lambda(\beta,\Omega)\int_{\Omega}u^2 \;dx.
\end{equation}
For any $\varepsilon>0$, choosing $\varphi=\ds\frac{u^2}{z+\varepsilon}$ in \eqref{weak_sol}, we get
\begin{equation}\label{testfuncv}
\begin{split}
\int_{\Omega} \left[\frac{2u}{z+\varepsilon}\left(\nabla z\cdot\nabla u\right)-\frac{u^2}{(z+\varepsilon)^2}|\nabla z|^2\right]\,dx +\beta\int_{\partial \Omega_0}\frac{zu^2}{z+\varepsilon} \,d\mathcal{H}^{n-1}   & =\lambda\int_{\Omega}\frac{zu^2}{z+\varepsilon} \;dx.
\end{split}\end{equation}
Subtracting \eqref{testfuncu} by \eqref{testfuncv}, being $\ds\frac{z}{z+\varepsilon}<1$, we get
\begin{equation*}
    0\le \int_{\Omega}\left| \nabla u -\frac{u}{z+\varepsilon}\nabla z \right|^2\,dx \le \int_{\Omega}\left(\lambda(\beta,\Omega)-\lambda\frac{z}{z+\varepsilon}\right)u^2\;dx.
\end{equation*}
Passing to the limit as $\varepsilon\to 0^+$, we get 
\begin{equation*}
\left(\lambda(\beta,\Omega)-\lambda\right)
 \int_{\Omega}u^2\;dx\ge 0.
\end{equation*}
Since $\lambda(\beta,\Omega)$ is the smallest eigenvalue, this implies that $\lambda(\beta,\Omega)=\lambda$. Therefore, by the simplicity of the first eigenvalue, $z=u$ up to a multiplicative constant.
\end{proof}

\subsection{The behavior with respect to $\beta$}
In this section, we study the  behavior of the first eigenvalue \eqref{min_pb} with respect to the Robin parameter.
\begin{proposition}
Let be $\beta>0$ and $\Omega=\Omega_0\setminus\overline{\Theta}$, where  $\Omega_0, \Theta\subset\R^n$, $n\ge 2$, are two open, bounded, connected  sets  such that $\Omega_0$ has Lipschitz boundary and  $\Theta\Subset\Omega_0$. Then, $ \lambda(\beta,\Omega)$ is non-decreasing, continuous and differentiable with respect to $\beta$ and it holds
\[
\frac{d}{d\beta} \lambda(\beta,\Omega)=\displaystyle \frac{ \displaystyle\int_{\partial\Omega_0}u^2\;d\mathcal{H}^{N-1}}{ \displaystyle\int_{\Omega}u^2\;dx},
\]
where $u$ is a minimizer of \eqref{min_pb}.
\end{proposition}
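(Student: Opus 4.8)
The plan is to exploit that $\lambda(\beta,\Omega)$ is, for fixed $\Omega$, an infimum over a parameter-independent class of a family of functions that are \emph{affine} in $\beta$. Indeed, for each admissible $\psi\in H^1_{\partial\Theta}(\Omega)$ with $\psi\nequiv 0$, the map
\[
\beta\longmapsto \frac{\displaystyle\int_\Omega|\nabla \psi|^2\,dx+\beta\int_{\partial\Omega_0}\psi^2\,d\mathcal H^{n-1}}{\displaystyle\int_\Omega \psi^2\,dx}
\]
is affine in $\beta$ with slope $\big(\int_{\partial\Omega_0}\psi^2\,d\mathcal H^{n-1}\big)\big/\big(\int_\Omega \psi^2\,dx\big)\ge 0$. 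Taking the infimum over $\psi$ in \eqref{min_pb}, the nonnegativity of all slopes immediately gives that $\beta\mapsto\lambda(\beta,\Omega)$ is nondecreasing, and the fact that an infimum of affine functions is concave gives concavity on $(0,+\infty)$; concavity on an open interval in turn forces continuity. Thus monotonicity and continuity come for free, and the only substantial point is the differentiability together with the explicit formula.

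For differentiability we would squeeze the difference quotient between two boundary ratios. Let $u_\beta$ denote a minimizer of \eqref{min_pb} at the parameter $\beta$, which exists by \Cref{prop_existence}. Using $u_\beta$ as a test function at level $\beta+h$ (it is admissible for the same problem) yields, for $h>0$,
\[
\frac{\lambda(\beta+h,\Omega)-\lambda(\beta,\Omega)}{h}\le \frac{\displaystyle\int_{\partial\Omega_0}u_\beta^2\,d\mathcal H^{n-1}}{\displaystyle\int_\Omega u_\beta^2\,dx},
\]
while testing the quotient at level $\beta$ with the minimizer $u_{\beta+h}$ gives the matching lower bound with $u_{\beta+h}$ in place of $u_\beta$. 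An entirely symmetric computation handles $h<0$, so in all cases the difference quotient is trapped between the boundary ratio of $u_\beta$ and that of $u_{\beta+h}$.

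To pass to the limit we would establish the continuity of the eigenfunction as $\beta$ varies. The normalized minimizers $u_{\beta+h}$ are bounded in $H^1(\Omega)$ (because $\lambda(\beta+h,\Omega)$ stays bounded near $\beta$ and controls the Dirichlet energy), hence, up to subsequences, they converge weakly in $H^1(\Omega)$ and strongly in $L^2(\Omega)$ to a unit-norm function which is a minimizer of \eqref{min_pb} at $\beta$; by the simplicity proved in \Cref{prop_simpl} this limit must coincide with $u_\beta$ up to sign, so the whole family converges and the limit is independent of the subsequence. The compactness of the trace operator $H^1(\Omega)\hookrightarrow L^2(\partial\Omega_0)$ then upgrades the weak $H^1$ convergence to strong convergence of the boundary traces, whence the boundary ratio of $u_{\beta+h}$ converges to that of $u_\beta$. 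Letting $h\to 0^{\pm}$ in the sandwich shows that both one-sided derivatives equal the stated expression, giving differentiability and the formula.

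The main obstacle is exactly this last passage to the limit, i.e.\ the continuity $u_{\beta+h}\to u_\beta$ and the convergence of the boundary integrals: here simplicity is what pins down the limiting eigenfunction (otherwise any unit-norm minimizer could arise and the boundary ratio would not be determined), and the compactness of the trace embedding is what allows one to control the surface term under mere weak $H^1$ convergence. Once these two ingredients are in place, the rest of the argument is routine.
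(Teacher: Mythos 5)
Your proposal is correct, and it is worth noting that it supplies substantially more than the paper does: the paper's own proof consists of one line declaring the monotonicity obvious and deferring the differentiability entirely to \cite[Prop. 2.3]{dellapietra2020optimal}, so your write-up is a self-contained replacement following the standard route such references take. The structure is sound: each Rayleigh quotient is affine in $\beta$ with nonnegative slope, so the infimum is nondecreasing and concave, hence continuous on $(0,+\infty)$ --- a nice economy, since it settles continuity before any compactness is invoked; then the two-sided test-function comparison traps the difference quotient between the boundary ratios of $u_\beta$ and $u_{\beta+h}$, and the limit is identified by compactness together with the simplicity of \Cref{prop_simpl} (which, as you observe, is exactly what pins down the limiting eigenfunction; the residual sign or scalar ambiguity is harmless because the boundary ratio is scaling-invariant). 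One technical point would need a sentence if written out in full: the compact embedding $H^1(\Omega)\hookrightarrow L^2(\Omega)$ and the compact trace $H^1(\Omega)\to L^2(\partial\Omega_0)$ are not automatic for $\Omega=\Omega_0\setminus\overline{\Theta}$, since no regularity is assumed on $\partial\Theta$; they do hold on the subspace $H^1_{\partial\Theta}(\Omega)$, because its elements extend by zero across $\overline{\Theta}$ to functions of $H^1(\Omega_0)$ and $\partial\Omega_0$ is Lipschitz. This is the same level of implicit reliance on standard facts as the paper itself (e.g.\ in \Cref{prop_existence}), so it is a polish item, not a gap.
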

\begin{proof}
The monotonicity with respect to $\beta$ is obvious. For the proof of the  differentiability refer to \cite[Prop. 2.3]{dellapietra2020optimal}.
\end{proof}%
\begin{remark}
The previous result gives
\begin{equation}
    \label{asy_ND}
\lim_{\beta \to 0^+}\lambda(\beta,\Omega)=\lambda_{{ND}}(\Omega),
\end{equation}
where $\lambda_{{ND}}(\Omega)$  is the first Neumann-Dirichlet eigenvalue of the Laplacian (see for instance \cite{hersch1962contribution,anoop2023reverse,dellapietra2020optimal}).
\end{remark}
In what follows, we prove some estimates on $\lambda(\beta,\Omega)$ in the spirit of the ones contained in \cite{gavitone2023steklov} for the first Steklov-Robin Laplacian eigenvalue (see also \cite{di2022two} and  \cite{kuttler1969inequality}). 
\begin{theorem}\label{mainstime}
Let be $\beta>0$ and $\Omega=\Omega_0\setminus\overline{\Theta}$, where  $\Omega_0, \Theta\subset\R^n$, $n\ge 2$, are two open, bounded, connected  sets  such that $\Omega_0$ has Lipschitz boundary and  $\Theta\Subset\Omega_0$.  
Then, the following estimates hold
	\label{main}
	\begin{equation}\label{Kutt_40i}
	\frac{1}{\lambda_{{DD}}(\Omega)} \le\frac{1}{\lambda(\beta,\Omega)}\leq\frac{1}{\lambda_{{DD}}(\Omega)}+\frac{1}{\beta q(\Omega) },
	\end{equation}
	where $\lambda_{{DD}}(\Omega)$ is the first Dirichlet-Dirichlet eigenvalue of $\Omega$  and  $q(\Omega)$ is
\begin{equation}
\label{q}
    q(\Omega) = \inf_{\displaystyle\substack{\Delta w=0\\ w\in H^1(\Omega)\setminus \{0\} }}\frac{\displaystyle\int_{\partial \Omega_0}w^2\,d\mathcal{H}^{n-1}}{\displaystyle\int_{\Omega}w^2\,dx}.
\end{equation}
\end{theorem}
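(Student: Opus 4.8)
The plan is to prove the two inequalities in \eqref{Kutt_40i} separately, the left one being elementary and the right one resting on an orthogonal decomposition of the first eigenfunction. For the lower bound on $1/\lambda(\beta,\Omega)$ -- equivalently $\lambda(\beta,\Omega)\le\lambda_{DD}(\Omega)$ -- I would simply observe that $H^1_0(\Omega)\subset H^1_{\partial\Theta}(\Omega)$ and that for any $\psi\in H^1_0(\Omega)$ the boundary term $\beta\int_{\partial\Omega_0}\psi^2\,d\mathcal H^{n-1}$ vanishes, so the Rayleigh quotient in \eqref{min_pb} reduces to the Dirichlet--Dirichlet one. Taking the infimum over the smaller class $H^1_0(\Omega)$ can only raise the value, which yields $\lambda(\beta,\Omega)\le\lambda_{DD}(\Omega)$.

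For the upper bound, let $u$ be a first eigenfunction, set $\Lambda:=\lambda(\beta,\Omega)$, and write $u=v+w$, where $v\in H^1_0(\Omega)$ solves the torsion-type problem $-\Delta v=\Lambda u$ in $\Omega$ with $v=0$ on $\partial\Omega$, and $w:=u-v$. A direct computation using $-\Delta u=\Lambda u$ shows that $w$ is harmonic, while the trace identities $w=u$ on $\partial\Omega_0$ and $w=0$ on $\partial\Theta$ follow from the boundary conditions satisfied by $u$ and $v$. The crucial point is the orthogonality $\int_\Omega\nabla v\cdot\nabla w\,dx=0$, obtained by integrating by parts and using $\Delta w=0$ together with $v=0$ on $\partial\Omega$; this gives $\int_\Omega|\nabla u|^2\,dx=\int_\Omega|\nabla v|^2\,dx+\int_\Omega|\nabla w|^2\,dx$, and in particular $\int_\Omega|\nabla v|^2\,dx\le\int_\Omega|\nabla u|^2\,dx$.

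I would then estimate the two pieces separately. Since $v\in H^1_0(\Omega)$, the variational characterization of $\lambda_{DD}(\Omega)$ gives $\int_\Omega v^2\,dx\le\lambda_{DD}(\Omega)^{-1}\int_\Omega|\nabla v|^2\,dx\le\lambda_{DD}(\Omega)^{-1}\int_\Omega|\nabla u|^2\,dx$; since $w$ is harmonic and nonzero, it is admissible in \eqref{q}, whence $\int_\Omega w^2\,dx\le q(\Omega)^{-1}\int_{\partial\Omega_0}w^2\,d\mathcal H^{n-1}=q(\Omega)^{-1}\int_{\partial\Omega_0}u^2\,d\mathcal H^{n-1}$. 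Writing $A^2:=\int_\Omega|\nabla u|^2\,dx$ and $B^2:=\int_{\partial\Omega_0}u^2\,d\mathcal H^{n-1}$, the triangle inequality in $L^2(\Omega)$ gives $\|u\|_{L^2(\Omega)}\le\lambda_{DD}^{-1/2}A+q^{-1/2}B$. Finally, regrouping the right-hand side as $\lambda_{DD}^{-1/2}A+(\beta q)^{-1/2}\,(\sqrt\beta\,B)$ and applying Cauchy--Schwarz, together with the identity $\Lambda\,\|u\|_{L^2(\Omega)}^2=A^2+\beta B^2$ coming from \eqref{min_pb}, one obtains $\|u\|_{L^2(\Omega)}^2\le\bigl(\lambda_{DD}^{-1}+(\beta q)^{-1}\bigr)\,\Lambda\,\|u\|_{L^2(\Omega)}^2$, which after division by $\Lambda\,\|u\|_{L^2(\Omega)}^2$ yields exactly the right-hand inequality of \eqref{Kutt_40i}.

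I expect the main obstacle to be the justification of the decomposition at the regularity level needed for the integrations by parts: the existence of $v\in H^1_0(\Omega)$, the harmonicity and boundary traces of $w$, and the validity of Green's identity on the Lipschitz annulus $\Omega$. One must also check that $w\not\equiv0$ so that it is a legitimate competitor in \eqref{q}; the degenerate case $\int_{\partial\Omega_0}u^2\,d\mathcal H^{n-1}=0$ forces $u\in H^1_0(\Omega)$ and $\Lambda=\lambda_{DD}(\Omega)$, for which the inequality is immediate.
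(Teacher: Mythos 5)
Your proof is correct and takes essentially the same route as the paper: the paper builds the same decomposition in the reverse order, first defining the harmonic part $h$ (harmonic in $\Omega$, equal to $u$ on $\partial\Omega_0$ and to $0$ on $\partial\Theta$) and setting $v:=u-h\in H^1_0(\Omega)$, then using the same gradient orthogonality, the same two estimates via $\lambda_{DD}(\Omega)$ and $q(\Omega)$, the same Minkowski (triangle) inequality, and closing with squaring plus the arithmetic--geometric mean inequality, which is algebraically equivalent to your Cauchy--Schwarz regrouping. Your explicit handling of the left inequality and of the degenerate case $w\equiv 0$ are small additions that the paper leaves implicit.
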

\begin{proof}
Firstly, let us stress that by the trace embedding theorem  the quantity $q(\Omega)$ defined in \eqref{q} is strictly positive.  Let $u$ be a positive eigenfunction corresponding to $\lambda(\beta,\Omega)$ and let $h$ be the solution to the following problem
\begin{equation*}
    \begin{cases}
    \Delta h = 0 & \text{in}\,\Omega\\
    h=0 & \text{on}\,\partial \Theta\\
  \ds  h= u  & \text{on}\,\partial \Omega_0.
    \end{cases}
\end{equation*}
Let us set $v:=u-h$, then we immediately get
\begin{equation}
\label{grad}
    \int_{\Omega}|\nabla u|^2\,dx = \int_{\Omega}|\nabla v|^2\,dx+\int_{\Omega}|\nabla h|^2\,dx.
\end{equation}
Denoted by $D(u)=\ds \int_{\Omega}|\nabla u|^2\,dx$, by using the Minkowski inequality, \eqref{q} and \eqref{grad}, we get
\begin{equation*}
    \begin{split}
\sqrt{\ds\int_{\Omega}u^2\,dx}&\le \sqrt{\ds\int_{\Omega}v^2\,dx}+\sqrt{\ds\int_{\Omega}h^2\,dx} \le \sqrt{\frac{1}{\lambda_{DD}(\Omega)}\ds D(u)}+\sqrt{\frac{1}{q(\Omega)}\ds\int_{\partial \Omega_0}u^2\,d\mathcal{H}^{n-1}}.
\end{split}
\end{equation*}
Squaring both sides we have
\begin{equation*}
    \begin{split}
    \int_{\Omega}u^2\,dx\le \frac{D(u)}{\lambda_{DD}(\Omega)}+ \frac{1}{q(\Omega)}\int_{\partial\Omega_0}u^2\,d\mathcal{H}^{n-1}+2\sqrt{\frac{D(u)}{\lambda_{DD}(\Omega)}\frac{1}{q(\Omega)}\int_{\partial \Omega_0}u^2\,d\mathcal{H}^{n-1}}.
    \end{split}
\end{equation*}
Applying the arithmetic-geometric mean inequality and the H\"{o}lder inequality we get
\begin{equation*}
    \begin{split}
     \int_{\Omega}u^2\,dx &\le \frac{D(u)}{\lambda_{DD}(\Omega)}+\frac{1}{q(\Omega)}\int_{\partial \Omega_0}u^2\,d\mathcal{H}^{n-1}+ \frac{D(u)}{\beta q(\Omega)}
     + \frac{\beta}{\lambda_{DD}(\Omega)}\int_{\partial \Omega_0}u^2\,d\mathcal{H}^{n-1}\\
     &= \bigg(\frac{1}{\lambda_{DD}(\Omega)}+\frac{1}{ \beta q(\Omega)}\bigg)\bigg(D(u)+\beta\int_{\partial \Omega_0}u^2\,d\mathcal{H}^{n-1}\bigg),
     \end{split}
\end{equation*}
that gives the conclusion \eqref{Kutt_40i}.
\end{proof}
\begin{remark}
The previous result gives
\begin{equation}
\label{est_lim}
\lim_{\beta \to +\infty 
}\lambda(\beta,\Omega)=\lambda_{{DD}}(\Omega).
\end{equation}
Moreover, taking $w=1$ as test function in \eqref{q}, we get the following estimate
\begin{equation}
    \label{diff-stima}
\frac{1}{\lambda(\beta,\Omega)}-\frac{1}{\lambda_{{DD}}(\Omega)}\le\frac{|\Omega|}{\beta P(\Omega_0)}.
\end{equation}
Hence, if $\Omega_0$ is convex,  denoted by  $d_M:=\max\{d(x,\partial\Theta) : x \in \de \Omega_0\}$, we get
\[
\lim_{d_M\to 0^+
}\lambda(\beta,\Omega)=\lambda_{{DD}}(\Omega).
\]
for any fixed $\beta>0$.

Furthermore, by \eqref{pmi} and \eqref{diff-stima}, we have 
\begin{equation*}
\frac{1}{\lambda(\beta,\Omega)}-\frac{1}{\lambda_{{DD}}(\Omega)}\le\frac{\rho(\Omega_0)}{\beta}.
\end{equation*}
Finally, for any fixed $\beta$, we obtain
\[
\lim_{\rho(\Omega_0) \to 0
}\lambda(\beta,\Omega)=\lambda_{{DD}}(\Omega).
\]
\end{remark}

\subsection{The problem on the spherical shell} 
In this section, we study the first Robin-Dirichlet eigenvalue on the spherical shell $A_{R_1,R_2}$. In this case, problem \eqref{min_pb} reduces to
\begin{equation}
    \label{min_pb_rad}
\lambda(\beta,A_{R_1,R_2})=\min_{\substack{\phi\in H^1_{\partial B_{R_1}}(A_{R_1,R_2})\\ \phi \nequiv 0}}\frac{\displaystyle\int_{A_{R_1,R_2}}|\nabla \phi|^2dx+\beta\int_{\partial B_{R_2}}\phi^2d\mathcal H^{n-1}}{\displaystyle\int_{A_{R_1,R_2}} \phi^2dx}.
\end{equation}
By \Cref{prop_existence},  any  minimizer of \eqref{min_pb_rad} has constant sign and solves the following problem
\begin{equation}
\label{eig_pb_rad}
\begin{cases}
-\Delta v =\lambda(\beta,A_{R_1,R_2})v & \text{ in } A_{R_1,R_2}\\
\partial_\nu v+\beta v=0 & \text{ on }\partial B_{R_2}
\\
v=0 & \text{ on }\partial B_{R_1}.
    \end{cases}
\end{equation}
The eigenfunctions of \eqref{eig_pb_rad} inherit symmetry properties, as proved in the following result.
\begin{proposition}
\label{prop_rad}
Let be $\beta>0$ and $R_2>R_1>0$. Let $v$ be a positive eigenfunction corresponding to $\lambda(\beta, A_{R_1,R_2})$. Then $v$ is  radially symmetric, that is there exists a positive function $\phi(r)$, $r>0$, such that $v(x)=\phi(|x|)$. Moreover, there exists a unique $\bar R\in (R_1,R_2)$ such that $\phi'(\bar R)=0$ and $\phi(r)$ is strictly increasing in $[R_1,\bar R]$ and is strictly decreasing in $[\bar R,R_2]$.
\end{proposition}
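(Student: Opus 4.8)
The plan is to derive the radial symmetry from the simplicity of the first eigenvalue (\Cref{prop_simpl}) and then to read off the monotonicity directly from the resulting radial ODE. First, since the spherical shell $A_{R_1,R_2}$ is invariant under every rotation $\mathcal R\in O(n)$, and each boundary sphere $\de B_{R_1}$ and $\de B_{R_2}$ is mapped onto itself, the function $v_{\mathcal R}(x):=v(\mathcal R x)$ again belongs to $H^1_{\de B_{R_1}}(A_{R_1,R_2})$ and solves \eqref{eig_pb_rad} with the same eigenvalue $\lambda(\beta,A_{R_1,R_2})$. By the simplicity proved in \Cref{prop_simpl}, $v_{\mathcal R}=c(\mathcal R)\,v$ for some constant $c(\mathcal R)$; since rotations preserve the $L^2(A_{R_1,R_2})$-norm we get $|c(\mathcal R)|=1$, and since both $v$ and $v_{\mathcal R}$ are positive, $c(\mathcal R)=1$. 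Hence $v$ is invariant under all rotations, i.e. $v(x)=\phi(|x|)$ for a suitable $\phi$; moreover $\phi>0$ on $(R_1,R_2)$ by the strong maximum principle, and $\phi$ is $C^2$ up to the endpoints by elliptic regularity on the smooth annulus.

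Next I would rewrite \eqref{eig_pb_rad} for radial functions. Since $\Delta v=\phi''(r)+\frac{n-1}{r}\phi'(r)$, the eigenvalue equation becomes
\[
-\frac{1}{r^{n-1}}\bigl(r^{n-1}\phi'(r)\bigr)'=\lambda(\beta,A_{R_1,R_2})\,\phi(r),\qquad r\in(R_1,R_2),
\]
while the Dirichlet condition on $\de B_{R_1}$ gives $\phi(R_1)=0$, and the Robin condition on $\de B_{R_2}$, where the outer normal is the radial direction, gives $\phi'(R_2)+\beta\phi(R_2)=0$.

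The monotonicity then follows from a single observation. Setting $g(r):=r^{n-1}\phi'(r)$, the ODE reads $g'(r)=-\lambda(\beta,A_{R_1,R_2})\,r^{n-1}\phi(r)$, which is strictly negative on $(R_1,R_2)$ because $\lambda(\beta,A_{R_1,R_2})>0$ and $\phi>0$ there; thus $g$ is strictly decreasing. At the outer endpoint the Robin condition yields $g(R_2)=-\beta R_2^{n-1}\phi(R_2)<0$. At the inner endpoint, since $\phi(R_1)=0$ and $\phi>0$ immediately to the right, the one-sided derivative satisfies $\phi'(R_1)\ge 0$; were it $0$, then $g(R_1)=0$ and the strict decrease of $g$ would force $\phi'<0$ on all of $(R_1,R_2)$, hence $\phi<0$ there, contradicting positivity. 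Therefore $g(R_1)>0$.

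Finally, since $g$ is continuous, strictly decreasing, and changes sign from $g(R_1)>0$ to $g(R_2)<0$, there is a unique $\bar R\in(R_1,R_2)$ with $g(\bar R)=0$, equivalently $\phi'(\bar R)=0$. For $r<\bar R$ one has $g(r)>0$, so $\phi'(r)>0$ and $\phi$ is strictly increasing on $[R_1,\bar R]$; for $r>\bar R$ one has $g(r)<0$, so $\phi'(r)<0$ and $\phi$ is strictly decreasing on $[\bar R,R_2]$. I expect the only genuinely delicate point to be the rotation-invariance step, which relies on both boundary components being preserved by $O(n)$ together with the simplicity of the first eigenvalue; the monotonicity is then an elementary consequence of the sign of $\bigl(r^{n-1}\phi'\bigr)'$.
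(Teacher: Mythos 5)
Your proof is correct and takes essentially the same route as the paper: radial symmetry from the simplicity of the first eigenvalue (\Cref{prop_simpl}) combined with the rotational invariance of the shell, followed by the observation that $g(r)=r^{n-1}\phi'(r)$ is strictly decreasing (since $\lambda(\beta,A_{R_1,R_2})>0$ and $\phi>0$) and has the right signs at the two endpoints. The only differences are that you spell out details the paper leaves implicit --- the normalization argument giving $c(\mathcal R)=1$, and the exclusion of $g(R_1)=0$ where the paper simply asserts $\phi'>0$ near $R_1$ --- so your write-up is, if anything, slightly more complete.
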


\begin{proof}
The radial symmetry follows from the simplicity of the first eigenvalue and the rotational invariance of the problem. Then if $v$ is a positive eigenfunction, we have $v=\phi(|x|)$, where $\phi(r)$ has to solve the following ordinary differential equation:
\[
\begin{cases}
- \frac{1}{r^{n-1}}\left(\phi'(r)r^{n-1}\right)'=\lambda(\beta, A_{R_1,R_2}) \phi(r) & \text{ for } r\in(R_1,R_2)\\
\phi ' (R_2)+\beta \phi(R_2)=0 \\
\phi(R_1)=0.
\end{cases}
\]
Since $\phi(r)>0$ for $r\in (R_1,R_2)$, then we have
\begin{equation}
\label{sign_der}
\left(\phi'(r)r^{n-1}\right)'<0,
\end{equation}
that means $\phi'(r)r^{n-1}$ is a strictly decreasing function. Since $\phi(R_1)=0$ and $\phi(r)>0$ for $r\in (R_1,R_2)$, then 
\begin{equation}
\label{sign_der_1}
\phi'(r)>0\quad r\in [R_1,R_1+\delta],
\end{equation}
for sufficiently small $\delta>0$. Meanwhile, since we know that $\phi ' (R_2)=-\beta \phi(R_2)<0$, then by continuity 
\begin{equation}
\label{sign_der_2}
\phi'(r)<0 \quad r\in [R_2-\delta',R_2],
\end{equation}
for sufficiently small $\delta'>0$.
The conclusion easily follows from \eqref{sign_der}-\eqref{sign_der_1}-\eqref{sign_der_2}.
\end{proof}
\begin{remark}
\label{livelli}
   Let $v(x)=\phi(r)$, with $r=|x|$, be a positive eigenfunction corresponding to $\lambda(\beta, A_{R_1,R_2})$. By \Cref{prop_rad},  we define $v_m=\phi(R_2)$ and $v_M=\phi(\bar R)$. Then we have (see also \Cref{RDff++})
   \begin{itemize}
   \item[(i)]
   The super-level sets of $v$  are the following
   \begin{equation}
\{x \in A_{R_1,R_2} \colon v(x)>t\}=\begin{cases}
A_{r_i(t),R_2} \text{ if } 0\le t<v_m\\
A_{r_i(t),r_o(t)} \text{ if } v_m\le t<v_M,
\end{cases}
   \end{equation}
   \item[(ii)] The sub-level sets  of $v$  are the following
    \begin{equation}
\{x \in A_{R_1,R_2} \colon v(x)<t\}=\begin{cases}
A_{R_1,r_i(t)} \text{ if } 0< t\le v_m\\
A_{R_1,r_i(t)}\cup A_{r_o(t),R_2} \text{ if } v_m< t\le v_M,
\end{cases}
   \end{equation}
   \end{itemize}
    where $r_i(t)$ is such that $R_1\le r_i(t)\le \bar R$ and $\phi(r_i(t))=t,  $
   while $r_o(t)$ is such that $ \bar{R}\le r_o(t)\le R_2$ and $\phi(r_o(t))=t$.
\end{remark}
\begin{figure}
    \centering
\includegraphics[totalheight=0.4\textwidth,scale=.3]{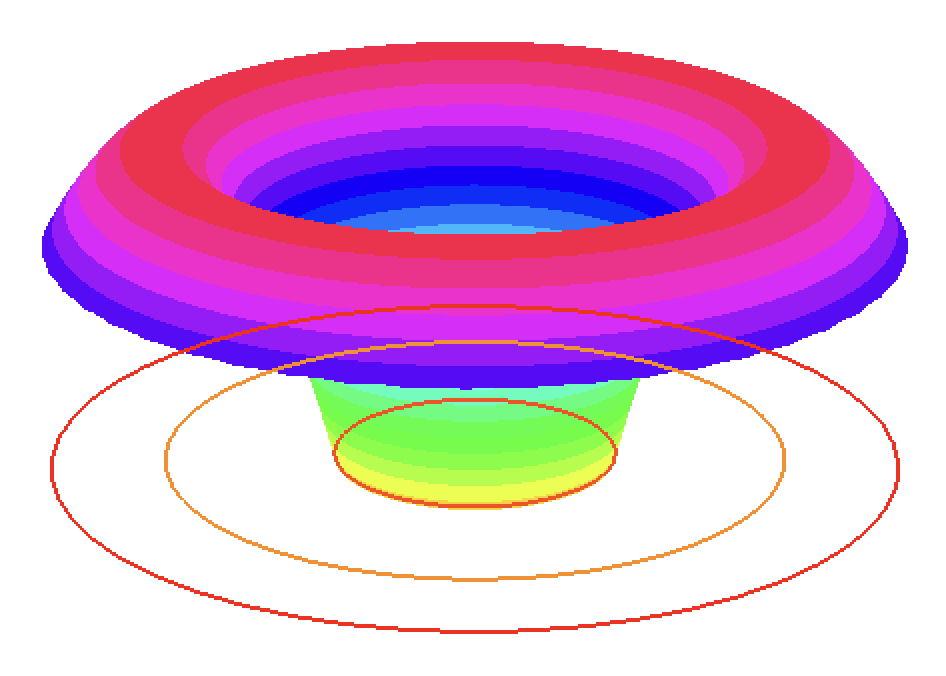}
\caption{The eigenfunction of the Robin-Dirichlet Laplacian on an annulus (the red disks have radii  $R_1$ and $R_2$ respectively).  The orange disk has radius $\bar R$, that corresponds to the maximum level set, that is the set where the monotonicity of the radial function  $\phi$ changes. The figure has been realized by using FreeFem++ \cite{MR3043640}.
}
    \label{RDff++}
\end{figure}

Finally, we prove two 
monotonicity results for the eigenvalue with respect to the inner and the outer radius.
\begin{proposition}
Let $R_2>R_1>0$. Then
    \begin{itemize}
        \item the map $r\in (R_1,R_2)\mapsto \lambda(\beta,A_{R_1,r})$ is strictly decreasing,
        \item the map $r\in (R_1,R_2)\mapsto \lambda(\beta,A_{r,R_2})$ is strictly increasing, 
    \end{itemize}
   for any fixed positive parameter $\beta$.
\end{proposition}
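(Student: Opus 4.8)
The plan is to prove the two statements in the opposite order to the one listed: first the inner‑radius statement, which is a clean domain‑monotonicity fact, and then to \emph{deduce} the outer‑radius statement from it by a scaling trick, rather than attacking the latter directly. The reason is that enlarging the outer radius moves the \emph{Robin} part of the boundary, so a naive monotonicity argument is not available there.

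For the inner radius I fix the outer radius and take $R_1<a<a'<R_2$, so that $A_{a',R_2}\subset A_{a,R_2}$ and the Dirichlet boundary is pushed outward. Let $u$ be the positive first eigenfunction on $A_{a',R_2}$ provided by \Cref{prop_existence}. Since $u=0$ on $\partial B_{a'}$, its extension $\tilde u$ by zero across the shell $A_{a,a'}$ lies in $H^1_{\partial B_a}(A_{a,R_2})$ and is admissible for \eqref{min_pb_rad}. Because $\tilde u$ vanishes on $A_{a,a'}$ and coincides with $u$ on the Robin boundary $\partial B_{R_2}$, its Rayleigh quotient equals that of $u$, and hence $\lambda(\beta,A_{a,R_2})\le\lambda(\beta,A_{a',R_2})$. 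Strictness follows because equality would force $\tilde u$ to be a first eigenfunction on $A_{a,R_2}$, hence strictly positive in the interior (strong maximum principle, or unique continuation for $-\Delta\tilde u=\lambda\tilde u$), contradicting $\tilde u\equiv0$ on the open set $A_{a,a'}$. This yields that $r\mapsto\lambda(\beta,A_{r,R_2})$ is strictly increasing.

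For the outer radius I fix $R_1$, take $R_1<b<b'$ and set $t=b'/b>1$. The decisive tool is the scaling identity
\[
\lambda(\beta,\Omega)=t^{2}\,\lambda(\beta/t,\,t\Omega),\qquad t>0,
\]
which holds since the dilation $x\mapsto tx$ multiplies eigenvalues by $t^{-2}$ and normal derivatives by $t^{-1}$, so the Robin parameter rescales to $\beta/t$. Applying it to $\Omega=A_{R_1,b}$, for which $t\Omega=A_{tR_1,b'}$, I chain it with the inner‑radius monotonicity just established (note $tR_1>R_1$) and with the elementary bound $t^{2}\lambda(\beta/t,\Omega)\ge\lambda(\beta,\Omega)$ valid for $t\ge1$, to obtain
\[
\lambda(\beta,A_{R_1,b})=t^{2}\,\lambda(\beta/t,A_{tR_1,b'})>t^{2}\,\lambda(\beta/t,A_{R_1,b'})\ge\lambda(\beta,A_{R_1,b'}).
\]
The auxiliary bound is a one‑line Rayleigh estimate: testing $\lambda(\beta,\Omega)$ with the first eigenfunction $v$ of $\lambda(\beta/t,\Omega)$ gives $\lambda(\beta,\Omega)\le\lambda(\beta/t,\Omega)+\beta(1-\tfrac1t)\,\tfrac{\int_{\partial\Omega_0}v^2}{\int_\Omega v^2}$, and discarding the Dirichlet energy in $\lambda(\beta/t,\Omega)\ge(\beta/t)\,\tfrac{\int_{\partial\Omega_0}v^2}{\int_\Omega v^2}$ bounds the last fraction by $(t/\beta)\lambda(\beta/t,\Omega)$, whence $\lambda(\beta,\Omega)\le t\,\lambda(\beta/t,\Omega)\le t^{2}\lambda(\beta/t,\Omega)$ since $\lambda\ge0$. (Equivalently, this reflects the concavity of $\beta\mapsto\lambda(\beta,\Omega)$.)

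The main obstacle is exactly the outer‑radius case. Enlarging outward moves the Robin boundary, so the extension‑by‑zero argument used for the inner radius does not apply: the most natural competitor, extending the eigenfunction by the constant $\phi(b)$, \emph{increases} the penalty $\beta\int_{\partial B_{b'}}\phi^2$, and a direct test‑function comparison certifies the desired decrease only under the non‑sharp condition $\beta(n-1)<b\,\lambda(\beta,A_{R_1,b})$, which fails for large $\beta$. The scaling identity removes this difficulty by converting the moving Robin boundary into a change of the inner radius together with a change of the parameter, both already controlled; the only genuinely new ingredient is the sub‑quadratic growth bound $t^{2}\lambda(\beta/t,\Omega)\ge\lambda(\beta,\Omega)$, and I expect verifying that bound to be the one nontrivial step.
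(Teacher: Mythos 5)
Your proof is correct, and half of it genuinely diverges from the paper's. For the inner-radius statement you argue exactly as the paper does --- extend the first eigenfunction of the smaller shell by zero across the added annulus and use it as a test function --- except that you also supply the strictness step (equality would make the zero-extension a first eigenfunction on the larger shell, hence strictly positive inside by the strong maximum principle, contradicting that it vanishes on an open set); the paper's own computation stops at the non-strict inequality $\lambda(\beta,A_{R_1,R_2})\le\lambda(\beta,A_{r,R_2})$, so this addition actually completes what the statement claims. For the outer-radius statement the routes differ: the paper tests $\lambda(\beta,A_{R_1,R_2})$ directly with the radially stretched eigenfunction $v(x)=\phi\left(R_1+\frac{r-R_1}{R_2-R_1}(|x|-R_1)\right)$ of the smaller shell, extracting strictness from the contraction factor $\frac{r-R_1}{R_2-R_1}<1$ multiplying the gradient and boundary terms, whereas you reduce the outer case to the inner one via the exact scaling law $\lambda(\beta,\Omega)=t^{2}\lambda(\beta/t,t\Omega)$ combined with the parameter estimate $\lambda(\beta,\Omega)\le t\,\lambda(\beta/t,\Omega)\le t^{2}\lambda(\beta/t,\Omega)$ for $t\ge 1$, both of which you verify correctly (the scaling law via the transformation $\psi\mapsto\psi(\cdot/t)$ of the Rayleigh quotient, the parameter bound by testing with the eigenfunction of $\lambda(\beta/t,\Omega)$ and discarding the Dirichlet energy). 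As for what each approach buys: the paper's argument is a single computation, but it leans on the radial symmetry of the minimizer (\Cref{prop_rad}) and on an intermediate change-of-variables inequality (comparing integrals weighted by the Jacobian factor $s^{n-1}$ before and after the stretching) which is asserted rather than checked and is not entirely obvious; your argument never invokes radial symmetry, delivers strict inequalities in both items, and its mechanism (scaling plus monotonicity-in-$\beta$ bounds) would survive outside the radial setting, at the cost of two extra, elementary, lemmas.
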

\begin{proof}
Let be $R_1<r<R_2$ and let $\phi\in H^1_{\partial B_{R_1}}(A_{R_1,r})$ be a minimzer of $\lambda(\beta,A_{R_1,r})$. Since by \Cref{prop_rad} $\phi$ is radially symmetric, by a little abuse of notation we indicate $\phi(x)=\phi(|x|)$ and we set
\[
v(x):=\phi\left(R_1+\frac{r-R_1}{R_2-R_1}(|x|-R_1)\right).
\]
Refer to \cite[Lemma 4.1]{bucur2010alternative} for the filled case. By testing \eqref{min_pb_rad} with $v$, we have
\[
\begin{split}
\lambda(\beta, A_{R_1,R_2})&\leq \frac{\displaystyle\int_{A_{R_1,R_2}}|\nabla v|^2dx+\beta\int_{\partial B_{R_2}}v^2d\mathcal H^{n-1}}{\displaystyle\int_{A_{R_1,R_2}} v^2dx}\\
&\leq \frac{\displaystyle \left(\frac{r-R_1}{R_2-R_1}\right)^2 \int_{A_{R_1,r}}|\nabla \phi|^2dx+\beta\left(\frac{r-R_1}{R_2-R_1}\right)\int_{\partial B_{r}}\phi^2d\mathcal H^{n-1}}{\displaystyle \int_{A_{R_1,r}} \phi^2dx}<\lambda(\beta, A_{R_1,r}).
\end{split}
\]
To prove the second claim, let us consider a minimzer $\phi\in H^1_{\partial B_{r}}(A_{r,R_2})$ of $\lambda(\beta, A_{r,R_2})$. Again by a little abuse of notation, we indicate $\phi(x)=\phi(|x|)$ and we set
\[
v(x):=
\begin{cases}
\phi(x) & \text{ in } A_{r,R_2}\\
0       & \text{ in } A_{R_1,r}
\end{cases}
\]
By testing \eqref{min_pb_rad} with $v$, we have
\[
\lambda(\beta, A_{R_1,R_2})\leq  \frac{\displaystyle \int_{A_{r,R_2}}|\nabla v|^2dx+\beta\int_{\partial B_{R_2}}v^2d\mathcal H^{n-1}}{\displaystyle \int_{A_{r,R_2}} v^2dx}
=\lambda(\beta, A_{r,R_2}).
\]
\end{proof}

\section{The first order shape derivative}
\label{sec_4_shape}
In this section, we compute the first domain derivative of $\lambda(\beta, \Omega)$.  For the reader convenience, we recall the Hadamard's formulas (see for instance \cite{sokolowski1992introduction,henrot2006variation}).

Let $V(x)$ a vector field such that $V\in W^{1,\infty}(\R^n;\R^n)$ and, for any $t>0$, let us set 
\[
\begin{split} 
\Omega_0(t) & :=\{ \mathtt x_t:=x+tV(x), \,x\in \Omega_0\},\\    
\Theta(t) & :=\{ \mathtt x_t:=x+tV(x), \,x\in \Theta\},
\end{split}
\]
and $\Omega(t):=\Omega_0(t)\setminus \overline{\Theta(t)}$.
  
Let $f(t,x)$ be such that $f(t,\cdot)\in W^{1,1}(\R^n)$ and differentiable at $t$, then it holds:
\begin{gather}\label{hf1}
\begin{split}
\frac{d}{dt} \int_{\Omega(t)}f(t,x)\, d \mathcal H^{n-1}=&
 \int_{\Omega(t)}\frac{\partial}{\partial t}[f(t,x)]\, d \mathcal H^{n-1} +\\
 & +\int_{+\partial \Omega(t)} f(t,x) \langle V,\nu \rangle\,d \mathcal H^{n-1}.
\end{split} 
\end{gather} 
Moreover, when  the integral is defined on the oriented boundary of $\Omega$, if $f(t,\cdot)\in W^{2,1}(\R^n)$, it holds:
\begin{gather}\label{hf}
\begin{split}
\frac{d}{dt} \int_{+\partial \Omega(t)}f(t,x)\, d \mathcal H^{n-1}=& \int_{+\partial \Omega(t)}\frac{\partial}{\partial t}[f(t,x)] \, d \mathcal H^{n-1} +\\ 
& +(n-1)\int_{+\partial \Omega(t)} H_t(x) \,f(t,x) \langle V,\nu \rangle\,d \mathcal H^{n-1}\\ 
&+\int_{+\partial \Omega(t)}\frac{\partial}{\partial \nu}[f(t,x)]\,\langle V,\nu \rangle \, d \mathcal H^{n-1},
\end{split}
\end{gather} 
where $H_t$ denotes the mean curvature of $\partial \Omega(t)$. For further references, see also \cite[Chap. 5]{henrot2006variation} and \cite{bandle2015second}.

At this stage, we are in position to prove the following.
\begin{theorem}
The first domain derivative of $\lambda(\beta,\Omega)$ in the direction $V$ is
\begin{equation}
\label{derivata}
\begin{split}
\frac{d}{dt}[\lambda(\beta, \Omega(t))]\Big|_{t=0}=&\int_{\de \Omega}\left\{|\nabla u|^2\!\!+\!\! (n-1)\beta Hu^2 - \!(\lambda(\beta, \Omega)+ \!2\beta^2)u^2\right\}
\langle V,\nu \rangle\,d\mathcal H^{n-1}.
\end{split}
\end{equation}
\end{theorem}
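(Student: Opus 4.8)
The plan is to exploit that $\lambda(\beta,\Omega)$ is simple (\Cref{prop_simpl}): together with the standard differentiability theory for simple eigenvalues, this guarantees that $t\mapsto\lambda(\beta,\Omega(t))$ is differentiable at $t=0$ and that the associated eigenfunction $u_t$, normalized by $\int_{\Omega(t)}u_t^2\,dx=1$, is differentiable with shape derivative $u'$. Under this normalization one has $\lambda(\beta,\Omega(t))=\int_{\Omega(t)}|\nabla u_t|^2\,dx+\beta\int_{\partial\Omega_0(t)}u_t^2\,d\mathcal H^{n-1}$, so it suffices to differentiate the right-hand side at $t=0$ by means of the Hadamard formulas \eqref{hf1} and \eqref{hf}.

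First I would apply the volume version of \eqref{hf1} to $\int_{\Omega(t)}|\nabla u_t|^2\,dx$, producing $2\int_\Omega\langle\nabla u,\nabla u'\rangle\,dx+\int_{\partial\Omega}|\nabla u|^2\langle V,\nu\rangle\,d\mathcal H^{n-1}$, and the boundary formula \eqref{hf} to $\beta\int_{\partial\Omega_0(t)}u_t^2\,d\mathcal H^{n-1}$, producing $2\beta\int_{\partial\Omega_0}uu'\,d\mathcal H^{n-1}+\beta(n-1)\int_{\partial\Omega_0}Hu^2\langle V,\nu\rangle\,d\mathcal H^{n-1}+\beta\int_{\partial\Omega_0}\partial_\nu(u^2)\langle V,\nu\rangle\,d\mathcal H^{n-1}$. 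Differentiating the normalization constraint with the same volume transport formula yields the auxiliary identity $2\int_\Omega uu'\,dx=-\int_{\partial\Omega}u^2\langle V,\nu\rangle\,d\mathcal H^{n-1}$.

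The core step is to remove the terms containing the unknown shape derivative $u'$. I would integrate $2\int_\Omega\langle\nabla u,\nabla u'\rangle\,dx$ by parts; using $-\Delta u=\lambda u$ and splitting $\partial\Omega=\partial\Omega_0\cup\partial\Theta$, the Robin condition $\partial_\nu u=-\beta u$ on $\partial\Omega_0$ exactly cancels the term $2\beta\int_{\partial\Omega_0}uu'$, leaving $2\lambda\int_\Omega uu'\,dx+2\int_{\partial\Theta}\partial_\nu u\,u'\,d\mathcal H^{n-1}$. Here I would insert the boundary condition satisfied by the shape derivative on the Dirichlet part: differentiating $u_t=0$ on $\partial\Theta(t)$ gives $u'=-\partial_\nu u\,\langle V,\nu\rangle$ on $\partial\Theta$, while the auxiliary identity disposes of $2\lambda\int_\Omega uu'$. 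Finally I would use the boundary data to simplify the purely geometric terms: on $\partial\Omega_0$ the relation $\partial_\nu(u^2)=2u\,\partial_\nu u=-2\beta u^2$ converts the last term of \eqref{hf} into $-2\beta^2u^2\langle V,\nu\rangle$, whereas on $\partial\Theta$ we have $u=0$ and $|\nabla u|^2=(\partial_\nu u)^2$, so that all contributions on $\partial\Theta$ collapse to a single multiple of $|\nabla u|^2\langle V,\nu\rangle$. Collecting the surviving terms over $\partial\Omega$ yields \eqref{derivata}.

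The main obstacle is the rigorous handling of $u'$: since $u'$ does not vanish on $\partial\Theta$ (it equals $-\partial_\nu u\,\langle V,\nu\rangle$ there), it is \emph{not} an admissible test function in \eqref{weak_sol}, so the elimination of $u'$ cannot be done by direct substitution and must instead proceed through Green's identity combined with the Dirichlet shape-derivative condition. The delicate bookkeeping lies in keeping the orientation of $\nu$ on the inner boundary $\partial\Theta$ (the outer normal to $\Theta$) consistent throughout, and in matching the curvature and normal-derivative contributions of \eqref{hf} with the terms generated by the integration by parts.
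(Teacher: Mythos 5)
Your proposal is correct and follows essentially the same route as the paper's own proof: you differentiate the normalized Rayleigh quotient via the Hadamard formulas \eqref{hf1}--\eqref{hf}, eliminate the unknown shape derivative $u'$ through Green's identity combined with the Robin condition on $\partial\Omega_0$ and the Dirichlet shape-derivative condition $u'=-\partial_\nu u\,\langle V,\nu\rangle$ on $\partial\Theta$ (exactly the paper's \eqref{der_tot_Omegat} and \eqref{div_t}), and then invoke the normalization identity \eqref{normalization_constraint} before collecting the boundary terms. The orientation bookkeeping on the inner boundary that you flag as delicate is present in the paper as well (its oriented-boundary notation in \eqref{hf1}--\eqref{hf} and the sign of the $\partial\Theta$ term in \eqref{der1}), so your argument matches the published proof step for step.
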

\begin{proof}
For any $t\in\R$, we consider the first Robin-Dirichlet eigenvalue of $\Omega(t)$, that is
\begin{equation}\label{min_Omegat}
\lambda(\beta, \Omega(t)):=\min_{\substack{\psi\in H^{1}_{\partial \Theta(t)}(\Omega(t))\\ \psi \not \equiv0}}\left\{{\displaystyle\int_{\Omega(t)}|\nabla \psi|^2dx+\beta\int_{\partial\Omega_0(t)}\psi^2d\mathcal H^{n-1}}\ 
,\ \,\|\psi\|_{L^2(\Omega(t))}=1
\right\}.
\end{equation}

Any  minimizer $u_t\in H^{1}_{\partial \Theta(t)}(\Omega(t))$ of \eqref{min_Omegat} is a solution to the following  perturbed eigenvalue problem
\begin{equation}
\label{eig_Omegat}
\begin{cases}
-\Delta u_t=\lambda(\beta,\Omega(t))u_t & \text{ in }\ \Omega(t)\\
\partial_\nu u_t+\beta u_t=0&\text{ on } \partial\Omega_0(t)\\ 
u_t=0&\text{ on } \partial \Theta (t).
\end{cases}
\end{equation}
Denoted by $u'_t=\frac{\partial}{ \partial t}u_t$, and observing that the perturbed hole $\Theta(t)$ is the zero-level set of the function $u_t$, the inner boundary condition in \eqref{eig_Omegat} implies 
 \begin{equation}
 \label{der_tot_Omegat}
 u_t'= -\partial_\nu u_t (\nu\cdot V) \quad \text{ on } \partial \Theta(t).
 \end{equation} 

By using the boundary conditions in \eqref{eig_Omegat}, \eqref{der_tot_Omegat}, and the divergence theorem, we get
\begin{equation}
    \label{div_t}
\begin{split}
\int_{\Omega(t)}\langle\nabla u_t &,\nabla u_t' \rangle\, d\mathtt x_t=\lambda(\beta, \Omega(t))\int_{\Omega(t)} u_t \,u_t'\,d\mathtt x_t\\
&-\beta \int_{\partial \Omega_0 (t)} u_t\,u_t'\,d\mathcal H^{n-1}-\int_{\partial \Theta (t)}\left(\frac{\partial u_t}{\partial\nu}\right)^2\langle V,\nu \rangle\,d\mathcal H^{n-1}.
\end{split}
\end{equation}
Therefore, by using \eqref{hf1}, \eqref{hf} and \eqref{div_t}, we have:
\begin{equation}
\label{der1}
\begin{split}
\frac{d}{dt}[\lambda(\beta, \Omega(t))]=&2\lambda(\beta, \Omega(t))\int_{\Omega(t)} u_t \,u_t'\,d\mathtt x_t\\
&-\int_{\partial \Theta (t)}\left(\frac{\partial u_t}{\partial\nu}\right)^2\langle V,\nu \rangle\,d\mathcal H^{N-1}\\&+\int_{\de \Omega_0(t)}|\nabla u_t|^2\langle V,\nu \rangle\,d\mathcal H^{n-1}\\
&+(n-1)\beta\int_{\partial \Omega_0 (t)}  H_t \,u_t^2
\langle V,\nu \rangle\,d\mathcal H^{n-1}\\
&+\beta\int_{\partial \Omega_0 (t)}\frac{\partial u_t^2}{\partial\nu}\langle V,\nu \rangle\,d\mathcal H^{n-1}.
\end{split}
\end{equation}
It is straightway seen that the normalization condition in \eqref{min_Omegat} gives
\begin{equation}
\label{normalization_constraint}
2\int_{\Omega(t)} u_t \, u_t'\,d\mathtt x_t+
\int_{\partial \Omega_0 (t)}
u_t^2\,\langle V,\nu \rangle\,d\mathcal H^{n-1}=0.
\end{equation}

Substituting \eqref{normalization_constraint} in \eqref{der1} and recalling that $u_t$ vanishes on $\de \Theta$, we get the conclusion.
\end{proof}

\begin{remark}
Let us observe that  the spherical shell $A_{R_1,R_2}$ such that $|B_{R_1}|=|\Theta|$ and $|B_{R_2}|=|\Omega_0|$  is a stationary domain for the first Robin-Dirichlet eigenvalue.

Indeed, if both the volume of  the outer domain $\Omega_0$ and of the hole $\Theta$ are fixed, it holds:
\begin{equation}
    \label{volume_constraint}
\int_{\partial \Omega_0 (t)} \langle V,\nu \rangle \,d\mathcal H^{n-1}=0\quad \text{and} \quad \int_{\partial\Theta(t)}\langle V,\nu \rangle \,d\mathcal H^{n-1}=0.
\end{equation}

Then, by \Cref{prop_rad}, when the constraint \eqref{volume_constraint} holds, the first derivative in \eqref{derivata} is zero and this suggest that the spherical shell is an optimal shape candidate under both volume constraints of the hole $\Theta $ and $\Omega_0$.
\end{remark}

\section{Proof of the main result}
\label{sec_5_isop}
In this section, we provide the proof of the Theorem  \ref{thm_isop}. 

\begin{proof}[Proof of Theorem \ref{thm_isop}]
Let $v(x)=\phi(\left|x\right|)$ be a positive radial solution to the problem \eqref{min_pb_rad} on the spherical shell $A_{R_1,R_2}$. Let  $\bar R\in (R_1,R_2)$ be the unique point such that $\phi'(\bar R)=0$, whose existence has been proved in \Cref{prop_rad}, let $v_{M}=\phi(\bar R)$ be the maximum of $v$ and $v_m=\phi(R_2)$.

For any $x\in \Omega$, let us denote by $d_o(x)$ the distance of $x$ from $\partial\Omega_0$, that is
\[
d_o(x)=\inf\{\left|x-y\right|, y\in \partial\Omega_0\},
\] 
and by $d_i(x)$ the distance of $x$ from $\Theta$, that is
\[
d_i(x)=\inf\{\left|x-y\right|, y\in \Theta\}.
\] 

Since, $\Omega_0$ and $\Theta$ are convex sets, and $\bar{\Theta} \Subset \Omega_0$,  
we can consider a 
set $\Omega_i \Subset \Omega_0$  such that 
\begin{itemize}
\item $\overline\Theta\subset  \Omega_i$,
\item $|\Omega_i\setminus \overline \Theta|=|A_{R_1, \bar R}|$.
\end{itemize}
At this stage, we denote by $\mathsf M_i=\Omega_i\setminus \overline \Theta$ and by $\mathsf M_o:= \Omega_0 \setminus \overline{\mathsf M_i}$. Therefore, we have (see Figure 2):
\begin{itemize}
\item $\de \mathsf M_o=\de \Omega_0 \cup \de \mathsf M_i$,
\item $|\mathsf M_i|=|A_{R_1, \bar R}|$,
\item $|\mathsf M_o|=|A_{\bar R,R_2}|$.
\end{itemize}

\begin{figure}
\centering
\begin{tikzpicture}[line cap=round,line join=round,>=triangle 45,x=1cm,y=1cm, scale=.47]
\clip(-4,-8) rectangle (16,12);
\draw [rotate around={90:(0,0)},line width=2pt,fill=gray,fill opacity=0.11] (0,0) ellipse (6cm and 2cm);
\draw [rotate around={90:(0,0)},line width=2pt,color=black,fill=purple,fill opacity=0.1] (0,0) ellipse (3.4641016151377544cm and 1.7320508075688772cm);
\draw [rotate around={0:(0,0)},line width=2pt,fill=white,fill opacity=0.98] (0,0) ellipse (1.4142135623730951cm and 0.7071067811865476cm);
\draw [line width=2pt,fill=gray,fill opacity=0.11] (12,0) circle (3.4641016151377544cm);
\draw [line width=2pt,color=black,fill=purple,fill opacity=0.1] (12,0) circle (2.449489742783178cm);
\draw [line width=2pt,fill=white,fill opacity=0.88] (12,0) circle (1cm);
\draw [line width=2pt] (12,0)-- (12.679978343927734,0.7332321950032584);
\draw [line width=2pt] (12,0)-- (11.130352649185248,2.289915606571754);
\draw [line width=2pt] (12,0)-- (8.72934928275161,-1.1414218701963723);
\begin{scriptsize}
\draw[color=blue] (0.7362159722929748,4.564422155210649) node {${\mathsf{M_o}}$};
\draw[color=blue] (0.6733668015612675,1.5790865454545466)  node {${\mathsf{M_i}}$};
\draw[color=blue]
(0.4533947040002919,-0.2174963991130822) node {${\Theta}$};
\draw[color=black] (12.4,-0.30066459423503265) node {${R_1}$};
\draw[color=black] (12.143340460097848,1.6890725942350344) node {${\bar{R}}$};
\draw[color=black] (10.320714508878337,0.27496625277161785) node {${R_2}$};
\end{scriptsize}
\end{tikzpicture}
\label{fig_insiemi}
 \caption{The geometry of the sets $M_i$ and $M_o$ in $\Omega$, of the spherical shells $A_{R_1,\bar R}$ and $A_{\bar R,R_2}$ in $A_{R_1,R_2}$.}
\end{figure}

Keeping in mind Remark \ref{livelli},  we define the following test function 
\[
w(x):=
\begin{cases}
G_o(d_o(x))\quad &\text{if } x\in\mathsf M_o \text{ and } d_o(x)< R_2-\bar R \\
v_M &\text{if } x\in\mathsf M_o \text{ and } d_o(x)\geq  R_2-\bar R\\
G_i(d_i(x)) &\text{if } x\in\mathsf M_i \text{ and } d_i(x)< \bar R- R_1 \\
v_M &\text{if } x\in\mathsf M_i \text{ and } d_i(x)\geq  \bar R- R_1,\\
\end{cases}
\]
where $G_o$ is defined as 
\[
	G_o^{-1}(t)=\int_{v_m}^{t}\dfrac{1}{g_o(\tau)}\;d\tau\qquad v_m< t<v_M,
\]
 with  $g_o(\tau)=|Dv|_{\{v=\tau\}\cap A_{\bar R,R_2}}$ and $G_i$ is defined as 
\[
	G_i^{-1}(t)=\int_{0}^{t}\dfrac{1}{g_i(\tau)}\;d\tau\qquad 0< t<v_M,
\]
 with  $g_i(\tau)=|Dv|_{\{v=\tau\}\cap A_{R_1,\bar R}}$. 
 
We point out that, by Remark \ref{livelli}, there are two radii $r_i(\tau)$ and $r_o(\tau)$ such that  $v(x)=\phi(|x|)$ verifies $ \phi(r_i(\tau))=\phi(r_o(\tau))=\tau$, for  $v_m<\tau<v_M$. This is the reason for which we have to define  two different functions $g_o(\tau)$ and $g_i(\tau)$ in order to apply the web-function technique.

Moreover, we observe that 
\[
v(x)=
\begin{cases}
G_o(R_2-|x|)\quad &\text{if} \  |x|>\bar R\\
G_i(|x|-R_1) &\text{if} \  |x|<\bar R,
\end{cases}
\]
and 
\[
\begin{split}
& G_o(0)=v_m\\
& G_i(0)=0\\
& G_o(R_2-\bar R)=G_i(\bar R-R_1)= v_M.
\end{split}
\]
 Hence, by construction,  $w$ also satisfies the following properties: 
\begin{gather*}
w\in H^{1}(\Omega)\cap C(\bar\Omega)\\
|Dw|_{w=\tau}=|Dv|_{v=\tau}\\
\max_{\bar\Omega} w= v_M.
\end{gather*}

For any $v_m\leq t \leq v_M$, let us denote  
\[ 
\begin{split}
\mathsf E_{t}^o & :=\overline{\Omega_i} \cup \{x\in\mathsf M_o\ \colon\ w(x)>t\} \\ 
\mathsf F_t^o & :=\overline{B_{\bar R}}\cup \{x\in A_{\bar R, R_2} \  \colon\ v(x)>t\}.
\end{split}
\]
By construction, since $\mathsf E^o_{t}$ and $\mathsf F^o_{t}$ are  convex sets,   inequalities \eqref{derivata_composta_super1} and \eqref{iso_W_2} imply
	\begin{equation*}
-\dfrac{d}{dt}P(\mathsf E^o_{t})\geq n(n-1)\dfrac{W_2(\mathsf E^o_{t})}{g(t)}\geq n(n-1)n^{-\frac{n-2}{n-1}}\omega_n^{\frac{1}{n-1}}\dfrac{\left(P(\mathsf E^o_{t})\right)^{\frac{n-2}{n-1}}}{g(t)},
	\end{equation*}
 for $v_m<t<v_M$. Moreover, it holds
	\begin{equation*}
	-\dfrac{d}{dt} P(\mathsf F^o_{t})=n(n-1)n^{-\frac{n-2}{n-1}} \omega_n^{\frac{1}{n-1}} \dfrac{\left(P(\mathsf F^o_{t})\right)^{\frac{n-2}{n-1}}}{g(t)},
	\end{equation*}
for $v_m<t<v_M$. Since $P(\Omega_0)=P(B_{R_2})$ by assumptions, by using a comparison type theorem, we obtain 
	\begin{equation}
 \label{paoli}
P(	\mathsf E^o_{t})\leq P(\mathsf F^o_{t}), 
	\end{equation} 
for $v_m\leq t<v_M$.

By the coarea formula and \eqref{paoli}, we have
\begin{equation}
\label{gradient_estimates_e+_o}
\begin{split}
\int_{\mathsf M_o}|\nabla w(x)|^2\;dx=&
\int_{v_m}^{v_M} g_o(t)\;d\mathcal{H}^{N-1}\left(\partial \mathsf E^o_{t}\cap \mathsf M_o \right)dt\le \\&\le\int_{v_m}^{v_M} g_o(t)d\mathcal{H}^{N-1}(\partial \mathsf F_t^o \cap A_{\bar R, R_2})\;dt=\int_{A_{\bar R, R_2}}|\nabla v(x)|^2\:dx.
\end{split}
\end{equation}
Since, by construction, $w(x)=v_m$ on $\partial \Omega_0$, then 
\begin{equation}\label{termine_bordo_me_o}
\beta\int_{\partial \Omega_0}w^2\:d\mathcal{H}^{N-1}=\beta v^2_m P(\Omega_0)=\beta v^2_m P(B_{R_2})=\beta \int_{\partial B_{R_{2}}}v^2\;d\mathcal{H}^{N-1}.
\end{equation}
Now, let us define $\mu^o(t)=| \mathsf 
 E^o_{t}\cap \mathsf M_o|$ and $\eta^o(t)=|\mathsf F^o_{t}\cap A_{\bar R,R_2}|$. By using again the coarea formula, we obtain
\begin{equation*}
    \begin{split}
\frac{d}{dt}\mu^o(t)&=-\int_{\{ w=t\}\cap\mathsf M_o}\dfrac{1}{|\nabla w(x)|}\;d\mathcal{H}^{n-1}=-\dfrac{\mathcal{H}^{n-1}\left(  \partial \mathsf 
 E_{t}^o\cap\mathsf M_o\right)}{g_o(t)}\\
&\geq -\dfrac{\mathcal{H}^{n-1}( \partial \mathsf F^o_t \cap A_{\bar R,R_2})}{g_o(t)}=-\int_{\{ v=t\}\cap A_{\bar R, R_2}}\dfrac{1}{|\nabla v(x)|}\;d\mathcal{H}^{n-1}=\frac{d}{dt}\eta^o (t),
    \end{split}
\end{equation*}
for any $v_m\le t<v_M$.
This inequality is trivially true also if $0<t<v_m$.
Since $\mu^o(0)=\eta^o(0)=|\mathsf M_o|$, by integrating from $0$ to $t<v_{M}$, we have:
 \begin{equation*}
\mu^o(t)\geq\eta^o(t).
\end{equation*}

Furthermore, we have
\begin{equation}
\label{L_p_estimates_e+_o}
\int_{\mathsf M_o}w^2(x)\;dx=\int_{v_m}^{v_M}2t\mu^o(t)dt \ge \int_{v_m}^{v_M}2t\eta^o(t)\;dt= \int_{A_{\bar R,R_2}}v^2(x) \;dx.
\end{equation}

On the other hand, let us denote  
\[ 
\begin{split}
E_{t}^i & := \overline \Theta \cup  \{x\in\mathsf M_i\ \colon\ w(x)<t\}, \\ 
F_t^i & :=\overline{B_{R_1}}\cup \{x\in A_{\bar R,R_2} \  \colon\ v(x)<t\}.
\end{split}
\]
If we denote
\[
\begin{split}  
\mathsf  E^i_{t}&:=\{x\in\R^n\ \colon\  d_i(x)<G_i^{-1}(t)\}, \\
\mathsf  F^i_{t} &:=  \{x\in \R^n \ \colon\ |x|<R_1+G_i^{-1}(t)\},
\end{split}
\]
then it is worth noting that
\[ 
\begin{split}
E_{t}^i \subseteq \mathsf E^i_{t},\qquad\ F_t^i= \mathsf F_{t}^i .
\end{split}
\]

At this stage, let us set $\delta_i= G_i^{-1}(t)$. We get
\begin{equation}
\label{perimetro_vivo}
\begin{split}
\mathcal H^{n-1}(\partial E_t^i\cap \Omega)&\le P(\mathsf E^i_{t})= P(\Theta+\delta_i B_{1}) \\
& \leq n\sum_{k=0}^{n-1}\binom{n-1}{k}W_{k+1}(\Theta)\delta_i^k\\
& \le n\sum_{k=0}^{n-1}\binom{n-1}{k}W_{k+1}(B_{R_{1}})\delta^k\\
&= P(B_{R_{1}}+\delta_iB_{1})=P(\mathsf F_t^i)= P(F^i_{t}),
\end{split}
\end{equation}
where in the second line we have used the Steiner formula, in the third line we have used the Aleksandrov-Fenchel inequalities and that $W_{n-1}(\Theta)=W_{n-1}(B_{R_1})$, in the fourth line we have used again the Aleksandrov-Fenchel inequalities.

Using the coarea formula and \eqref{perimetro_vivo}, we have
\begin{equation}
\label{gradient_estimates_e+}
\begin{split}
\int_{\mathsf M_i}|\nabla w(x)|^2\;dx=&
\int_{0}^{v_M} g_i(t)\;d\mathcal{H}^{n-1}\left(\partial E_{t}^i\cap\mathsf M_i \right)dt\le \\&\le\int_{v_m}^{v_M} g_i(t)d\mathcal{H}^{n-1}(\partial F_t^i\cap A_{R_1,\bar R})\;dt=\int_{A_{R_1,\bar R}}|\nabla v(x)|^2\:dx.
\end{split}
\end{equation}

Now, let us define $\mu^i(t)=|E^i_{t}\cap\Omega|$ and $\eta^i(t)=| F^i_{t}\cap A_{R_1,\bar R}|$. By using again coarea formula, we obtain
\begin{equation}
    \begin{split}
\frac{d}{dt}\mu^i(t)=&\int_{\{ w=t\}\cap\mathsf M_i}\dfrac{1}{|\nabla w(x)|}\;d\mathcal{H}^{N-1}=\dfrac{\mathcal{H}^{n-1}\left(  \partial E^i_{t}\cap\mathsf M_i\right)}{g_i(t)}\\ 
&\leq \dfrac{\mathcal{H}^{n-1}(\partial F^i_t\cap A_{R_1,\bar R})}{g_i(t)}=\int_{\{ v=t\}\cap A_{R_1,\bar R}}\dfrac{1}{|\nabla v(x)|}\;d\mathcal{H}^{n-1}=\frac{d}{dt}\eta^i(t),
    \end{split}
\end{equation}
for $0\le t<v_M$.
Since $\mu^i(0)=\eta^i(0)=0$, by integrating from $0$ to $t<v_{M}$, we have
 \begin{equation*}\label{magmu}
\mu^i(t)\leq\eta^i(t).
\end{equation*}
On the other hand, we have
\begin{equation}
\label{L_p_estimates_e+}
\int_{\mathsf M_i}w^2(x)\;dx=\int_{0}^{v_M}2t(|\mathsf M_i| -\mu^i(t))dt \ge \int_{0}^{v_M}2t(|A_{R_1,\bar R}|-\eta^i(t))\;dt= \int_{A_{R_1,\bar R}}v^2(x) \;dx.
\end{equation}
Finally, using \eqref{gradient_estimates_e+_o} and \eqref{gradient_estimates_e+}, \eqref{termine_bordo_me_o}, \eqref{L_p_estimates_e+_o} and \eqref{L_p_estimates_e+}, we achieve
\[
\begin{split}
\lambda(\beta, \Omega)&\leq \frac{\int_{\Omega}|\nabla w|^2\;dx+\beta\int_{\partial\Omega_0}w^2\;d\mathcal{H}^{n-1}   }{\int_{\Omega}w^2\;dx} \\ 
& \leq \frac{\int_{A_{R_1,R_2}}|\nabla v|^2\;dx+\beta\int_{\partial B_{R_2}}v^2\;d\mathcal{H}^{n-1}    }{\int_{A_{R_1,R_2}}v^2\;dx}=\lambda (\beta, A_{R_1,R_2}),
\end{split}
\]
and this concludes the proof.
\end{proof}

\section{Conclusions}
In this Section, we collect some open problems.
\begin{enumerate}

\item To investigate the problem \eqref{min_pb_intro} for negative Robin parameter ($\beta<0$).
\vspace{.3cm}
\item To study the problem \eqref{min_pb_intro} when $\beta$ is a suitable  positive function.
\vspace{.3cm}
\item To extend our results to the $p$-Laplacian operator and to anisotropic operators.
\vspace{.3cm}
\item To study the  first eigenvalue of the Laplacian when  Robin boundary conditions are imposed on both components of the boundary.
\end{enumerate}

\section*{Acknowledgments}
This work has been partially supported by the MiUR-PRIN 2017 grant \lq\lq Qualitative and quantitative aspects of nonlinear PDEs\rq\rq, 
by the MiUR-PRIN 2022 grant \lq\lq Geometric-Analytic Methods for PDEs and Applications (GAMPA)\rq\rq, by the MiUR PRIN-PNRR 2022 grant \lq\lq Linear and Nonlinear PDE's: New directions and Applications\rq\rq\ and by GNAMPA of INdAM.


\bibliographystyle{
alpha}
\bibliography{bibliografia}
\end{document}